\renewcommand{\MR}[1]{} 
\newcommand{\Z}{\mathbb{Z}}
\newcommand{\N}{\mathbb{N}}
\newcommand{\F}{\mathbb{F}}
\newcommand{\Q}{\mathbb{Q}}
\DeclareMathOperator{\G}{G}
\DeclareMathOperator{\Gal}{Gal}
\DeclareMathOperator{\Br}{Br}
\DeclareMathOperator{\Hom}{Hom}
\DeclareMathOperator{\cd}{cd}
\newcommand{\pow}[1]{(\!(#1)\!)}
\newcommand{\pair}[2]{\left[{#1}, {#2}\right)}
\newcommand{\KM}{\operatorname{K}^M}
\theoremstyle{definition}
\newtheorem{theorem}{Theorem}[section]
\newtheorem{lemma}[theorem]{Lemma}
\newtheorem{proposition}[theorem]{Proposition}
\newtheorem{corollary}[theorem]{Corollary}
\theoremstyle{remark}
\newtheorem{remark}[theorem]{Remark}
\author{Philip Dittmann}
\address{Institut für Algebra, Technische Universität Dresden, 01062 Dresden, Germany}
\curraddr{Department of Mathematics, University of Manchester, Manchester M13 9PL, United Kingdom}
\email{philip.dittmann@manchester.ac.uk}
\title[Characterising local fields by Galois theory and the Brauer group]{Characterising local fields of positive characteristic by Galois theory and the Brauer group}
\begin{document}

\begin{abstract}
  We show that each local field $\F_q\pow{t}$ of characteristic
  $p > 0$ is characterised up to isomorphism within the class of all
  fields of imperfect exponent at most $1$ by (certain small quotients
  of) its absolute Galois group together with natural axioms
  concerning the $p$-torsion of its Brauer group.  This complements
  previous work by Efrat--Fesenko, who analysed fields whose absolute
  Galois group is isomorphic to that of a local field of
  characteristic $p$.
\end{abstract}

\maketitle

\section{Introduction}

The birational part of Grothendieck's anabelian geometry,
and its close relative known as Bogomolov's programme,
are concerned with recovering fields
(say finitely generated over a prime field or an algebraically closed field)
from their absolute Galois groups or certain quotients thereof.
See for instance \cite[Chapter XII, §3]{NSW} and \cite[§2]{Szamuely_BourbakiPop} for an overview.
An important step in most approaches to such reconstruction questions
consists in first developing a \emph{local theory},
i.e.\ recovering the decomposition groups of certain valuations
from the given Galois group,
see for instance \cite[§3]{Szamuely_BourbakiPop}
(commenting on \cite{Pop_GrothendiecksConjectureBirationalAnabelian}) and
\cite{Pop_BirationalAnabelianProgramBogomolovI} (also for its historical notes).
An illustration of this already appears in \cite{Neukirch_KennzeichnungPAdischEndlichAlgebraisch} --
preceding the formulation of the conjectures of anabelian geometry --
concerning the characterisation of number fields by their absolute Galois groups:
Here a Galois-theoretic characterisation of algebraic $p$-adic numbers
and their finite extensions is given.
(See also the presentation in \cite[Chapter XII, §§1--2]{NSW}.)

By now there exists a substantial body of work concerning
valuation-theoretic consequences of Galois-theoretic input data;
see for instance the textbook presentations
\cite[§5.4]{EnglerPrestel} and \cite[Chapter 26]{Efrat_ValuationsOrderingsMilnorKTheory}
(both ultimately relying on the technique of rigid elements going back to \cite{Ware_rigid, ArasonElmanJacob_rigid})
and the literature referenced there.
One of the arguably most satisfying results in this regard is the complete
characterisation of the class of $p$-adically closed fields
(a well-behaved class containing the $p$-adic numbers as well as their
finite extensions and their algebraic parts)
in terms of their Galois group \cite{Pop_GaloisscheKennzeichnung, Efrat_GaloisCharacterization, Koenigsmann_rigidElementsValuationsGaloisCharacterization}
in a generalisation of \cite[Theorem 1]{Neukirch_KennzeichnungPAdischEndlichAlgebraisch},
and in strong analogy to the Galois-theoretic characterisation
of real-closed fields \cite{ArtinSchreier_KennzeichnungReellAbgeschlossen}.
This result on $p$-adically closed fields in turn has consequences
for the section conjecture in anabelian geometry \cite{Koenigsmann_SectionConjecture}.

In positive characteristic, there is no comparable characterisation of local fields.
Efrat and Fesenko in \cite{EfratFesenko} analysed the structure of fields whose absolute Galois group is isomorphic (as a profinite group) to the absolute Galois group of a local field $\F_q\pow{t}$.
The characterization of such fields was previously posed as \cite[Problem~4.6]{Efrat_GaloisCharacterization}.
These always carry a non-trivial henselian valuation,
yet Efrat--Fesenko show by examples that a wide variety of fields occurs:
this includes examples of characteristic $0$,
ones of arbitrarily large imperfection degree,
and ones with imperfect residue field.

In the present article we show that once information concerning the Brauer group of $K$,
or rather its $p$-torsion subgroup,
is added to the Galois-theoretic data,
the pathologies of \cite{EfratFesenko} disappear,
and in fact a characterisation of local fields $\F_q\pow{t}$
\emph{up to isomorphism} is obtained,
i.e.\ a characterisation much stronger than in the $p$-adic setting
(where $p$-adic local fields cannot be distinguished Galois-theoretically from
the larger class of $p$-adically closed fields).

\begin{theorem}\label{thm:intro-basic}
  Let $p$ be a prime number, $q = p^n$ for some $n \geq 1$.
  Let $K$ be a field satisfying the following axioms:
  \begin{enumerate}
    \item[(Gal)] The absolute Galois group $\G_K = \Gal(K^{\mathrm{sep}}/K)$ is isomorphic to $\G_{\F_q\pow{t}}$;
    \item[(Imp)] $K$ has exponent of imperfection at most $1$;
    \item[(Brau)] the $p$-torsion part $\Br(K)[p]$ of the Brauer group of $K$ is isomorphic to $\Z/p$;
    \item[(Pair)] the natural pairing $\Hom(\G_K, \Z/p) \times K^\times/K^{\times p} \to \Br(K)[p]$ (see Section \ref{sec:explaining}) induces an isomorphism $K^\times/K^{\times p} \cong \Hom(\Hom(\G_K, \Z/p), \Br(K)[p])$.
  \end{enumerate}
  Then $K$ is isomorphic to the local field $\F_q\pow{t}$.
\end{theorem}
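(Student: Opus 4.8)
The plan is to use (Gal) to put a henselian valuation on $K$, and then to use the Brauer-theoretic axioms (Brau) and (Pair) as a local-duality input that rigidifies every remaining invariant of the valued field. First I would invoke the local theory behind (Gal): since $\G_K \cong \G_{\F_q\pow{t}}$, the group-theoretic recovery of inertia for this group (following Efrat--Fesenko; cf.\ also \cite[Chapter 26]{Efrat_ValuationsOrderingsMilnorKTheory}) yields a henselian valuation $v$ on $K$ with $\G_k \cong \hat\Z$ for the residue field $k$. At this point (Imp) already does real work: if $k$ were imperfect then $[K : K^p] \geq p \cdot [k : k^p] > p$, contradicting (Imp), so $k$ is perfect, and together with $\G_k \cong \hat\Z$ and residue characteristic $p$ this forces $k$ to be \emph{quasi-finite}. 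The task is thus reduced to showing that $v$ is an equal-characteristic discrete valuation with residue field the finite field $\F_q$, and that $K$ is complete.

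Next I would analyse the pairing of (Pair) valuation-theoretically. Filtering $K^\times/K^{\times p}$ by $v$ into its unit part and the quotient $\Gamma/p\Gamma$ of the value group, and matching this against the decomposition of $\Hom(\G_K, \Z/p)$ into unramified and ramified Artin--Schreier characters, the hypothesis that the symbol pairing $(\chi, a) \mapsto \pair{\chi}{a}$ is a perfect duality onto $\Br(K)[p] \cong \Z/p$ forces the graded pieces to pair up one-dimensionally. From this I expect to read off $\Gamma/p\Gamma \cong \Z/p$, hence (with henselianity) that $\Gamma$ is a $\Z$-group, and that $k^\times/k^{\times p}$ is trivial. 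The same perfect-pairing bookkeeping, applied with the explicit characteristic-$p$ shape of the symbol $\pair{\cdot}{\cdot}$, is what I would use to exclude mixed characteristic: in residue characteristic $p$ but characteristic $0$ the relevant symbol is a Kummer cup product whose dependence on $\Gamma$ and $\mu_p$ is incompatible with simultaneously having $\Br(K)[p] \cong \Z/p$ and the duality of (Pair) over a field with $\G_K \cong \G_{\F_q\pow{t}}$. This pins $\kar K = p$, so $K$ is an equal-characteristic henselian valued field with value group a $\Z$-group and quasi-finite residue field.

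It then remains to pass from henselian to complete and from quasi-finite to finite. Completeness I would extract from the surjectivity half of (Pair): the right-hand side $\Hom(\Hom(\G_K, \Z/p), \Z/p)$ is, as an abstract $\F_p$-vector space, the full dual of a countable-dimensional space and hence of dimension $2^{\aleph_0}$, so (Pair) forces $K^\times/K^{\times p}$ to have continuum dimension. A merely henselian field such as the henselisation $\F_q(t)^h$, being countable, has countable $K^\times/K^{\times p}$ and is thereby excluded; translating the required fullness of the one-unit group $1 + t\mathcal{O}_K$ modulo $p$-th powers back into the valuation shows that every power series must already lie in $\mathcal{O}_K$, so $v$ is complete and discrete and $\mathcal{O}_K \cong k[[t]]$.

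The genuine obstacle, which I expect to be the technical heart of the argument, is the last identification $k \cong \F_q$, i.e.\ excluding residue fields that are quasi-finite but infinite. Such fields are invisible to all the crude invariants used above: for $k$ quasi-finite of characteristic $p$ one still has $k^\times/k^{\times p} = 1$, the Laurent series field $k\pow{t}$ still has $K^\times/K^{\times p}$ and $\Hom(\G_K, \Z/p)$ of the same cardinalities as for $\F_q\pow{t}$, and $\Br(k\pow{t})[p] \cong \Z/p$ persists. Distinguishing $\F_q$ from an infinite quasi-finite field must therefore use the perfect pairing of (Brau) and (Pair) as a genuine finiteness statement---local Tate--Pontryagin duality in the precise form available only over a finite residue field---so that the non-degeneracy forced by $\Br(K)[p] \cong \Z/p$ on \emph{each} individual Artin--Schreier character, rather than on the whole space, collapses $k^\times$ to a finite torsion group and hence $k$ to $\F_q$. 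Assembling these steps, $K$ is the complete discretely valued equal-characteristic field with residue field $\F_q$ and uniformiser $t$, namely $\F_q\pow{t}$.
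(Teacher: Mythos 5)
Your overall skeleton (Galois theory gives a henselian valuation; the Brauer axioms pin down the valued field) agrees with the paper's, but the execution has a fatal gap at the completeness step, and it is not repairable: you never use (Imp) there, and Remark \ref{rem:condition-imperfection-needed} of the paper (Fehm's construction) shows that (Gal), (Brau) and (Pair) together do \emph{not} imply completeness. That construction produces a proper subfield $K \subsetneq \F_q\pow{t}$, relatively algebraically closed and dense in $\F_q\pow{t}$, for which the restriction maps $\G_{\F_q\pow{t}} \to \G_K$ and $\Br(K) \to \Br(\F_q\pow{t})$ are isomorphisms and $K^\times/p \to \F_q\pow{t}^\times/p$ is an isomorphism. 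This $K$ is henselian, discretely valued with residue field $\F_q$, the group $K^\times/p$ has continuum dimension, and every one-unit of the completion is congruent to a one-unit of $K$ modulo $p$-th powers of the completion --- precisely the ``fullness'' of $1+t\mathcal{O}_K$ that your third paragraph invokes --- yet $K$ is not complete. So no cardinality count on $K^\times/p$, and no statement about one-units modulo $p$-th powers, can force $K = \widehat{K}$. The paper's Proposition \ref{prop:field-is-local} uses (Imp) exactly here: since $[K:K^p] \leq p$, a uniformiser $\pi$ is a $p$-basis; given $x \in \widehat{K}$ one writes $y = z^p(1+\pi x^p)$ with $y \in K^\times$, $z \in \widehat{K}^\times$ (possible since $K^\times/p \to \widehat{K}^\times/p$ is an isomorphism by Lemma \ref{lem:p-th-power-classes-isom}), expands $y$ in the $p$-basis $\{\pi\}$, and compares coefficients in $\widehat{K}$ to conclude $x \in K$. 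Compounding this, your proposal spends (Imp) instead on perfectness of the residue field, and that deduction is also invalid: the correct estimate is $[K:K^p] \geq (vK : p\,vK)\cdot[Kv:(Kv)^p]$, so concluding $[K:K^p] \geq p\,[Kv:(Kv)^p]$ presupposes $vK \neq p\,vK$, which you do not know at that stage; the paper instead confronts imperfect residue fields directly (Lemma \ref{lem:imperfect-implies-uniformiser}).

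The other steps have gaps as well. (Gal) does not yield a henselian valuation with $\G_{Kv} \cong \hat{\Z}$: ramification theory controls only prime-to-$p$ data, which is why Proposition \ref{prop:EF-replacement} obtains only $\G_{Kv}(p') \cong \hat{\Z}/\Z_p$ and $Kv \cap \overline{\F_p} \cong \F_q$; the pro-$p$ part of $\G_{Kv}$ is invisible (the $p$-Sylow subgroups of $\G_K$ are free pro-$p$), and the Efrat--Fesenko examples include imperfect residue fields, so quasi-finiteness of the residue field is not an input from (Gal) but the main thing the Brauer axioms must prove. Your exclusion of mixed characteristic is asserted rather than argued; the actual argument (Lemma \ref{lem:char-p}) is one line of cohomology: if $\kar K \neq p$, then $\Br(K)[p] = H^2(\G_K, \mu_p) = 0$ because $\cd_p \G_K \leq 1$. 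For finiteness of the residue field --- which you rightly call the heart --- per-character non-degeneracy cannot do the job: local class field theory in Serre's formulation holds for complete discretely valued fields with \emph{any} quasi-finite residue field, so for infinite quasi-finite $k$ the field $k\pow{t}$ still satisfies (Brau) and its pairing is non-degenerate in each variable (and ``collapsing $k^\times$'' is empty, since $k^\times/p$ is already trivial for perfect $k$). What fails for infinite $k$ is the \emph{surjectivity} in (Pair), detected by the cardinality count of Lemma \ref{lem:finite-residue}: if $\kappa = \dim_{\F_p} Kv$ is infinite, then $\Hom(\frac{1}{\pi}\mathcal{O}_v/\wp(\mathcal{O}_v), \Br(K)[p])$ has cardinality at least $2^\kappa$, while by Lemma \ref{lem:pairing-props} the functionals $\pair{\cdot}{z}$ restricted to $\frac{1}{\pi}\mathcal{O}_v$ depend only on $z$ modulo $K^{\times p}(1+\pi^2\mathcal{O}_v)$, a group of cardinality at most $\kappa$. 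Finally, $vK/p\,vK \cong \Z/p$ does not make $vK$ discrete or a $\Z$-group (consider $\Z \oplus \Q$ ordered lexicographically, whose $p$-divisible convex subgroup is exactly what Lemmas \ref{lem:perfect-implies-uniformiser} and \ref{lem:discrete} are designed to eliminate), and identifying the finite residue field as $\F_q$ rather than some $\F_{q'}$ still requires F.~K.~Schmidt's uniqueness theorem together with Proposition \ref{prop:EF-replacement}.
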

It is a consequence of local class field theory that conversely the local field $\F_q\pow{t}$ satisfies the given axioms, see Proposition \ref{prop:local-satisfies-axioms}.
We also give a strengthened version of Theorem \ref{thm:intro-basic} requiring not an isomorphism of full absolute Galois groups, but only of certain quotients, see Theorem \ref{thm:main-sharpened};
this is in the spirit of further developments of the anabelian and
valuation-theoretic results mentioned above, see
\cite{Pop_modpMetabelianBirationalSectionConjecture},
\cite{Topaz_CommutingLiftableII},
\cite{EfratMinac_SmallGaloisEncodeValuations}
for samples of a much wider literature.

We are not concerned here with reconstructing the local field $\F_q\pow{t}$ \emph{functorially} from its absolute Galois group,
in the sense of identifying the group of field automorphisms with the outer automorphism group of the absolute Galois group.
This was already done in \cite{Abrashkin_ModifiedLocalAnalogueGrothendieck}
(following \cite{Mochizuki_VersionGrothendieckPAdic} in the $p$-adic case),
after endowing the absolute Galois group with the ramification filtration.

Let us comment on why the axioms considered in Theorem \ref{thm:intro-basic} are natural.
In characteristic $p > 0$, absolute Galois groups fail to give much information ``at the prime $p$''; this is most plainly visible in Galois cohomology, since the $p$-Sylow subgroup of such absolute Galois groups are projective \cite[Corollary 6.1.3]{NSW}.
There is an alternative definition of cohomology groups $H^i(K, \Z/p(j))$ -- which in characteristic away from $p$ we take as a synonym for the Galois cohomology $H^i(\G_K, \mu_p^{\otimes j})$ -- which remedies this defect, see \cite[p.~143]{Kato_HassePrinciple}.
For instance, with this definition we have $H^2(K, \Z/p(1)) = \Br(K)[p]$ and $H^1(K, \Z/p(1)) = K^\times/p$ for all fields $K$, whether of characteristic $p$ or not.
With this notation, Theorem \ref{thm:intro-basic} can be rewritten more suggestively as follows:
\begin{theorem}\label{thm:intro-katoesque}
  Let $p$ be a prime number, $q = p^n$ for some $n \geq 1$.
  Let $K$ be a field satisfying the following axioms:
  \begin{enumerate}
  \item[(Gal)] $\G_K \cong \G_{\F_q\pow{t}}$
  \item[(Imp')] $H^2(K, \Z/p(2)) = 0$
  \item[(Brau')] $H^2(K, \Z/p(1)) \cong \Z/p\Z$
  \item[(Pair')] The cup-product pairing $H^1(K, \Z/p(0)) \times H^1(K, \Z/p(1)) \to H^2(K, \Z/p(1))$ induces an isomorphism $H^1(K, \Z/p(1)) \cong \Hom(H^1(K, \Z/p(0)), H^2(K, \Z/p(1)))$.
  \end{enumerate}
  Then $K$ is isomorphic to the local field $\F_q\pow{t}$.
\end{theorem}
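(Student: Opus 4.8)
The plan is to extract a valuation from the Galois-theoretic hypothesis, use the two cohomological axioms to determine its characteristic, residue field and value group, and finally use the duality axiom (Pair') to force completeness, whence $K \cong \F_q\pow{t}$ by Cohen's structure theorem. First I would invoke \cite{EfratFesenko}: by (Gal) the field $K$ carries a nontrivial henselian valuation $v$, with residue field $k$ and value group $\Gamma$. The remaining axioms are then read off as constraints on the triple $(\kar K, k, \Gamma)$ by means of the standard decomposition of the Kato cohomology of a henselian valued field, whose graded pieces are assembled from $H^\ast(k,\Z/p(\ast))$ and from $\Gamma/p\Gamma$ via the tame/residue symbol.

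The first task is to pin down the characteristic and the coarse structure of $\Gamma$. Here (Brau') does the decisive work, and is exactly the point at which the characteristic-zero pathologies of \cite{EfratFesenko} are eliminated: if $\kar K = 0$ then, since the $p$-Sylow subgroups of $\G_{\F_q\pow{t}}$ are projective, $\G_K$ has cohomological $p$-dimension at most $1$, so the Kato group $H^2(K,\Z/p(1))$ reduces to ordinary Galois cohomology and hence vanishes, contradicting (Brau'). Thus $K$ has equal characteristic $p$. Unwinding (Brau') through the residue sequence for $(K,v)$ then shows in particular that $\Gamma/p\Gamma \cong \Z/p$. Since the imperfection exponent of a henselian equicharacteristic field exceeds that of its residue field by $\log_p|\Gamma/p\Gamma|$, axiom (Imp') now pins the imperfection exponent of $k$ to zero, so $k$ is perfect and the imperfect-residue pathology of \cite{EfratFesenko} is likewise removed.

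Next I would identify $k$ and $\Gamma$ exactly. The residue sequence $0 \to \Br(k)[p] \to H^2(K,\Z/p(1)) \to H^1(k,\Z/p) \to 0$ together with (Brau') forces $\Br(k)[p]=0$ and $H^1(k,\Z/p)\cong\Z/p$. Combined with the fact that $k$ inherits absolute Galois group $\hat{\Z}$ from the decomposition structure of $\G_{\F_q\pow{t}}$, and that Frobenius acts on the tame inertia by multiplication by $q$ (a group-theoretic invariant of $\G_K$), this identifies $k$ with the finite field $\F_q$. The same inertia structure, now constraining $\Gamma\otimes\hat{\Z}$, forces $\Gamma\cong\Z$. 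At this stage $K$ is a henselian, equicharacteristic $p$, discretely valued field with residue field $\F_q$.

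The final step is to upgrade ``henselian'' to ``complete'', and this is where I expect the main obstacle to lie; it is also the source of the unusually strong \emph{up to isomorphism} conclusion. The difficulty is that (Gal), (Imp') and (Brau') are all insensitive to completion: the henselization $\F_q(t)^h$ shares the same absolute Galois group, imperfection exponent, and $p$-torsion Brauer group as $\F_q\pow{t}$, yet is not isomorphic to it. The axiom that must break the tie is (Pair'). Since $H^1(K,\Z/p(0)) = \Hom(\G_K,\Z/p)$ is a fixed countable-dimensional $\F_p$-space depending only on $\G_K$, the target $\Hom(H^1(K,\Z/p(0)),\Z/p)$ of the duality is its \emph{full} linear dual, of dimension the continuum; the cup-product can be a perfect pairing only if $H^1(K,\Z/p(1)) = K^\times/K^{\times p}$ attains this maximal size, which for a henselian discrete valuation of the type above is equivalent to $K^\times$ exhausting $\widehat{K}^\times$ modulo $p$-th powers, i.e.\ to $K = \widehat K$. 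Making this precise --- identifying the two sides of the pairing through the henselian cohomology computation, checking that the induced map is always injective, and converting its surjectivity into completeness --- is the technical heart of the argument, after which Cohen's theorem gives $K \cong \F_q\pow{t}$.
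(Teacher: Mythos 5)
Your overall architecture (extract a henselian valuation from (Gal), pin down the characteristic via (Brau') and projectivity of the $p$-Sylow subgroups, then force completeness) matches the paper's, and your characteristic-$p$ step is exactly Lemma \ref{lem:char-p}. The first genuine gap is in the middle: the ``standard decomposition of the Kato cohomology of a henselian valued field'' you invoke --- in particular the sequence $0 \to \Br(k)[p] \to H^2(K,\Z/p(1)) \to H^1(k,\Z/p) \to 0$ and the claim that (Brau') yields $\Gamma/p\Gamma \cong \Z/p$ --- does not exist at this level of generality. At the prime $p$ in equal characteristic $p$, such residue decompositions are available for \emph{complete discretely valued} fields (Kato), but discreteness and completeness are precisely what has to be proved; the valuation supplied by Proposition \ref{prop:EF-replacement} comes with no control at $p$: no bound on $(vK : p \cdot vK)$, no control of defect, and no information on $\G_{Kv}$ beyond its prime-to-$p$ quotient. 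For the same reason, your claims that the decomposition structure of $\G_{\F_q\pow{t}}$ forces $\G_k \cong \hat\Z$ and $\Gamma \cong \Z$ are false: the examples of \cite{EfratFesenko} have $\G_K \cong \G_{\F_q\pow{t}}$ with infinite, even imperfect, residue fields, and a value group such as $\Z \times \Q$ with the lexicographic order satisfies $(\Gamma : l\Gamma) = l$ for every prime $l$, hence cannot be excluded by (Gal) together with $\Gamma/p\Gamma \cong \Z/p$, yet has no minimal positive element. (Your imperfection formula is also only an inequality, not an equality, because of defect: the field of Remark \ref{rem:condition-imperfection-needed} is henselian with perfect residue field and value group $\Z$ but imperfection exponent $>1$; your particular application happens to use the valid direction of the inequality.) In the paper, finiteness of $Kv$ and $vK \cong \Z$ are \emph{not} consequences of (Brau') plus group theory: they are extracted from the pairing axiom together with the countability of $\Hom(\G_K,\Z/p)$, via the norm computation of Lemma \ref{lem:pairing-props} and the counting arguments of Lemmas \ref{lem:imperfect-implies-uniformiser}--\ref{lem:discrete}; that is the actual content of Section \ref{sec:pinning-down} and cannot be replaced by a residue-symbol formalism.

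The second, fatal, gap is your last step. You argue that (Pair') forces $K^\times/p$ to have continuum size, hence to exhaust $\widehat{K}^\times/p$, ``i.e.\ $K = \widehat{K}$''. That final equivalence is false without (Imp'): the construction of Remark \ref{rem:condition-imperfection-needed} produces a proper dense henselian subfield $K \subsetneq \F_q\pow{t}$, relatively algebraically closed, with $\G_K \cong \G_{\F_q\pow{t}}$, $\Br(K) \cong \Br(\F_q\pow{t})$ and $K^\times/p \to \F_q\pow{t}^\times/p$ an isomorphism; so (Gal), (Brau') and (Pair') all hold while $K$ is not complete. Hence no argument running on (Pair') alone --- in particular no cardinality count, which cannot distinguish this $K$ from $\F_q\pow{t}$, even though it does rule out countable fields like the henselization $\F_q(t)^h$ --- can close this step. (Imp') must be used here, whereas in your outline it is spent only on making the residue field perfect. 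This is exactly what Proposition \ref{prop:field-is-local} does: granted $K^\times/p \cong \widehat{K}^\times/p$ (Lemma \ref{lem:p-th-power-classes-isom}), it uses $[K : K^p] \leq p$ to write elements over the $p$-basis $\{\pi\}$ and compares coefficients in $\widehat{K}$ to conclude $\widehat{K} \subseteq K$. (Note also that translating (Imp') into $[K:K^p] \leq p$ itself requires the Bloch--Kato--Gabber theorem, as explained in Section \ref{sec:explaining}.) So your proposal needs repair both in the middle (replace the residue sequence by pairing/countability arguments) and at the end (bring (Imp') into the completeness argument).
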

This suggests that the additional axioms beyond $\G_K \cong \G_{\F_q\pow{t}}$ merely serve to fix the defect concerning information at $p$ of Galois cohomology in characteristic $p$.

The definition of cohomology groups $H^i(K, \Z/p(i))$ involves Milnor $K$-groups modulo $p$;
in particular, a further phrasing of the axioms asserts that the mod $p$ graded Milnor $K$-ring $\KM_{\ast}(K^{\mathrm{sep}})/p$ of the separable closure of $K$ together with its $\G_K$-action is isomorphic to the corresponding data for $\F_q\pow{t}$.
In this sense, our theorem is reminiscent of Topaz's work on the characterisation of fields up to isomorphism by their Milnor $K$-ring \cite{Topaz_algDependenceMilnorKTheory}.
However, our axioms are much closer to being purely Galois-theoretic since we only consider mod $p$ data.

The proof of Theorem \ref{thm:intro-basic} is in two steps:
Firstly we use Galois-theoretic information at primes $l \neq p$ to show that $K$ carries a non-trivial henselian valuation $v$.
This is standard and can be found in \cite{EfratFesenko}.
We recapitulate these results in Section \ref{sec:exists-valn}, where we also correct a mistake in \cite{EfratFesenko}.
Secondly we use the additional information given by the Brauer group to show that the valuation $v$ is discrete with finite residue field and $K$ is complete.
This is done in Section \ref{sec:pinning-down}, and is loosely inspired by Pop's use of pro-$p$ information in the absolute Galois group $\G_{\Q_p}$ in \cite[Theorem E.9]{Pop_GaloisscheKennzeichnung}.

The phenomenon found here that information away from the residue characteristic $p$ is much easier to use with the generally available tools,
but also yields much weaker results, than information at the prime $p$ seems quite typical.
See also \cite{KoenigsmannStrommen} in this regard.

\subsection*{Acknowledgements}

I would like to thank the anonymous referee for a very careful reading of the manuscript
and a number of helpful suggestions, which helped improve the presentation.

\section{Discussion of the axioms}
\label{sec:explaining}

Let $K$ be an arbitrary field and $p$ a prime number.
We briefly recall some basic properties of the Brauer group, and define the pairing function
\[ \Hom(\G_K, \Z/p) \times K^\times/p \to \Br(K)[p] \]
occurring in our axioms.
Here and throughout we write $\Hom(\G_K, \Z/p)$ for the (discrete) abelian group of \emph{continuous} group homomorphisms from $\G_K$ to the discrete group $\Z/p$,
$K^\times/p$ for the group $K^\times/K^{\times p}$ of $p$-th power classes,
and $\Br(K)[p]$ for the set of elements of $\Br(K)$ of order dividing $p$.

The \emph{Brauer group} $\Br(K)$ of $K$ can equivalently be defined as the Galois cohomology group $H^2(\G_K, {K^{\mathrm{sep}}}^\times)$, or as the collection of central simple algebras over $K$ modulo Brauer equivalence with the tensor product as a binary operation.
We will not need the second viewpoint, which is explored for instance in \cite[§2.4]{GilleSzamuely_2nd}; see \cite[Theorem 4.4.3]{GilleSzamuely_2nd} for the equivalence of the definitions.

Given a character $\chi \colon \G_K \to \Z/p$ and an element $a \in K^\times$, we can define an element $(\chi, a) \in \Br(K)$ as follows, see \cite[Chapitre XIV, §1]{Serre_corps-locaux} or \cite[Proposition 4.7.3]{GilleSzamuely_2nd}:
Observe that $\chi \in \Hom(\G_K, \Z/p) = H^1(\G_K, \Z/p)$, and we have an exact sequence $0 \to \Z \to \Z \to \Z/p \to 0$ of abelian groups with trivial $\G_K$-action, where the first map is multiplication by $p$.
Hence we can apply the associated boundary map $\delta$ in cohomology to obtain an element $\delta(\chi) \in H^2(\G_K, \Z)$.
Secondly, we have $a \in K^\times = H^0(\G_K, {K^{\mathrm{sep}}}^\times)$.
Taking the cup product of these two cohomology classes we obtain $(\chi, a) := \delta(\chi) \cup a \in H^2(\G_K, {K^{\mathrm{sep}}}^\times) = \Br(K)$.
In the central simple algebra viewpoint, $(\chi, a)$ is the class of the cyclic algebra associated to $\chi$ and $a$.
By construction, this gives a bilinear map of abelian groups $\Hom(\G_K, \Z/p) \times K^\times \to \Br(K)$.
Since $\Hom(\G_K, \Z/p)$ is a $p$-torsion group, it follows that the image must actually lie in $\Br(K)[p]$, and the second argument factors through $K^\times/p$.
Thus we have the desired bilinear pairing $\Hom(\G_K, \Z/p) \times K^\times/p \to \Br(K)[p]$.
This pairing induces a group homomorphism \[ K^\times \to \Hom(\Hom(\G_K, \Z/p), \Br(K)[p]),\]
where the right-hand side is the group of homomorphisms $\Hom(\G_K, \Z/p) \to \Br(K)[p]$ of discrete groups.

An essential property of the pairing $(\cdot, \cdot)$ is the following:
\begin{lemma}\label{lem:kernel-pairing-norm}
  Let $a \in K^\times$ and let $\chi \colon \G_K \to \Z/p$ be a non-zero homomorphism.
  Let $L/K$ be the $\Z/p$-extension associated to the index-$p$ normal subgroup $\ker(\chi)$ of $\G_K$.
  Then $(\chi, a) = 0$ in $\Br(K)[p]$ if and only if $a$ is a norm of the field extension $L/K$.
\end{lemma}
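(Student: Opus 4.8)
The plan is to identify the pairing with the periodicity isomorphism in the Tate cohomology of the cyclic group $G = \Gal(L|K)$. Write $G = \langle\sigma\rangle \cong \Z/p$; since $\chi$ is non-zero with kernel $\G_L$, it factors as an isomorphism $\bar\chi \colon G \xrightarrow{\sim} \Z/p$, and the entire computation will take place inside the relative Brauer group $\Br(L|K) = \ker(\Br(K) \to \Br(L))$.

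First I would set up the comparison with the cohomology of the finite group $G$. By Hilbert 90 one has $H^1(G, L^\times) = 0$, so the inflation--restriction sequence identifies the inflation map $H^2(G, L^\times) \to H^2(\G_K, (K^{\mathrm{sep}})^\times) = \Br(K)$ with an isomorphism onto $\Br(L|K)$. Now $(\chi, a) = \delta(\chi) \cup a$ lies in $\Br(L|K)$, since $\chi$ restricts to $0$ on $\G_L = \ker(\chi)$ and hence so does $\delta(\chi)$, making $(\chi,a)$ split by $L$. It therefore suffices to compute the preimage in $H^2(G, L^\times)$. As inflation is compatible with cup products and with the boundary map $\delta$ of $0 \to \Z \xrightarrow{p} \Z \to \Z/p \to 0$, and is the identity on $H^0$ (namely $K^\times = (L^\times)^G \hookrightarrow (K^{\mathrm{sep}})^\times$), the class $(\chi, a)$ is the inflation of $\delta_G(\bar\chi) \cup a \in H^2(G, L^\times)$, where $\delta_G(\bar\chi) \in H^2(G, \Z)$.

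Next I would invoke the periodicity of the cohomology of the cyclic group $G$. One has $H^2(G, \Z) \cong \hat H^0(G, \Z) = \Z/p$, and because $\bar\chi$ is an \emph{isomorphism} its boundary $\delta_G(\bar\chi)$ is a \emph{generator} of this cyclic group. Cup product with a generator is an isomorphism $\hat H^0(G, L^\times) \xrightarrow{\sim} \hat H^2(G, L^\times) = H^2(G, L^\times)$, and under the canonical identification $\hat H^0(G, L^\times) = (L^\times)^G / N_{L/K}(L^\times) = K^\times / N_{L/K}(L^\times)$ the class of $a$ maps to $\delta_G(\bar\chi) \cup a$. Combining with the previous step, $(\chi, a) = 0$ in $\Br(K)$ if and only if the class of $a$ is trivial in $K^\times / N_{L/K}(L^\times)$, that is, if and only if $a$ is a norm from $L/K$.

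The main point to pin down is the claim that $\delta_G(\bar\chi)$ is a generator of $H^2(G, \Z)$ and that cup product with it realises the Tate periodicity isomorphism; this is precisely where the hypothesis $\chi \neq 0$ is used, ensuring $\bar\chi$ is an isomorphism and not merely a homomorphism. Everything else is the formal machinery of Tate cohomology for cyclic groups (periodicity and the computation $\hat H^0(G, L^\times) = K^\times / N_{L/K}(L^\times)$) together with Hilbert 90 and the ring-homomorphism property of inflation. Alternatively, since $(\chi, a)$ is by construction the class of the cyclic algebra attached to $\chi$ and $a$, this whole argument may be replaced by citing the classical splitting criterion for cyclic algebras in \cite[§4.7]{GilleSzamuely_2nd}.
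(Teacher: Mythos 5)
Your proof is correct, but it is worth noting how it relates to the paper: the paper does not argue this lemma at all, it simply cites \cite[Corollary 4.7.5]{GilleSzamuely_2nd} --- precisely the ``alternative'' you mention in your final sentence. What you have written out is, in effect, the standard proof of that cited result: the reduction to $\Br(L|K) \cong H^2(G, L^\times)$ via Hilbert 90 and inflation--restriction, the compatibility of inflation with $\delta$ and with cup products, and then Tate periodicity for the cyclic group $G$, under which cup product with the generator $\delta_G(\bar\chi) \in H^2(G,\Z) \cong \Z/p$ identifies $\hat H^0(G, L^\times) = K^\times/N_{L/K}(L^\times)$ with $H^2(G, L^\times)$. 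All the steps check out, including the two points that require care: injectivity of inflation on $H^2(G, L^\times)$ (so that vanishing in $\Br(K)$ can be tested in $H^2(G, L^\times)$), and the fact that $\delta_G(\bar\chi)$ is a generator exactly because $\bar\chi$ is an isomorphism, which is where $\chi \neq 0$ enters --- you flag this correctly. The trade-off is the usual one: the paper's citation is shorter and defers to the literature, while your argument is self-contained and makes visible that the lemma is pure Tate cohomology of cyclic groups, with no input specific to the characteristic-$p$ setting of the paper.
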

\begin{proof}
  See \cite[Corollary 4.7.5]{GilleSzamuely_2nd}.
\end{proof}

Let us show that local fields $\F_q\pow{t}$ satisfy the axioms of Theorem \ref{thm:intro-basic}.
\begin{proposition}\label{prop:local-satisfies-axioms}
  Suppose $K$ is a non-archimedean local field of arbitrary characteristic,
  for instance $K = \F_q\pow{t}$ for some prime power $q$ (not necessarily a power of $p$).
  Then $\Br(K)[p] \cong \Z/p$, and the pairing above induces an isomorphism
  \[ K^\times/p \cong \Hom(\Hom(\G_K, \Z/p), \Br(K)[p]). \]
\end{proposition}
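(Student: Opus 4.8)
The plan is to deduce both assertions from local class field theory together with Lemma \ref{lem:kernel-pairing-norm}, organising the second (harder) assertion as an instance of Pontryagin duality for the profinite group $K^\times/p$. First, the statement $\Br(K)[p] \cong \Z/p$ is immediate from the invariant isomorphism $\mathrm{inv}\colon \Br(K) \xrightarrow{\sim} \Q/\Z$ of local class field theory, since $(\Q/\Z)[p] = \tfrac{1}{p}\Z/\Z \cong \Z/p$. I would fix such an isomorphism $\Br(K)[p] \cong \Z/p$ once and for all, so that the pairing of the proposition becomes a bilinear map $\beta\colon \Hom(\G_K, \Z/p) \times K^\times/p \to \Z/p$, and write $A = \Hom(\G_K, \Z/p)$ and $V = K^\times/p$.

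The key structural point is that $V$ is a profinite $\F_p$-vector space. Indeed, choosing a uniformiser gives $K^\times \cong \Z \times \mathcal{O}_K^\times$ with $\mathcal{O}_K^\times$ compact, so $V \cong \Z/p \times \mathcal{O}_K^\times/(\mathcal{O}_K^\times)^p$; since the $p$-power map on the compact group $\mathcal{O}_K^\times$ has closed image, the quotient $\mathcal{O}_K^\times/(\mathcal{O}_K^\times)^p$ is a compact, totally disconnected $\F_p$-vector space, hence profinite. Thus $V \cong \prod_{i \in I} \F_p$ for a suitable index set $I$, its continuous Pontryagin dual $V^\vee := \Hom_{\mathrm{cont}}(V, \Z/p)$ is the discrete space $\bigoplus_{i \in I}\F_p$, and the canonical double-duality map $V \to \Hom(V^\vee, \Z/p)$ (where every homomorphism is automatically continuous, $V^\vee$ being discrete) is an isomorphism.

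It then remains to identify $\beta$ with this double-duality pairing, i.e.\ to show that $\chi \mapsto \beta(\chi, -)$ is an isomorphism $A \xrightarrow{\sim} V^\vee$. For $0 \neq \chi \in A$, Lemma \ref{lem:kernel-pairing-norm} shows that the kernel of the functional $\beta(\chi, -)\colon V \to \Z/p$ is the image of the norm group $N_{L_\chi/K}(L_\chi^\times)$, an open subgroup of index $[K^\times : N_{L_\chi/K}(L_\chi^\times)] = [\,L_\chi : K\,] = p$; in particular $\beta(\chi, -)$ is a nonzero continuous functional, so $A \to V^\vee$ is injective. For surjectivity, any nonzero $\phi \in V^\vee$ has as kernel an open subgroup of $K^\times$ of index $p$, which by the existence theorem of local class field theory is the norm group of a unique $\Z/p$-extension $L/K$; a character $\chi$ cutting out $L$ then has $\beta(\chi,-)$ with the same kernel as $\phi$, and after rescaling $\chi$ we obtain $\phi = \beta(\chi, -)$. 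Hence $A \cong V^\vee$, and composing with the double-duality isomorphism of the previous paragraph recovers precisely the map $K^\times/p \to \Hom(A, \Z/p)$ induced by the pairing, giving the claimed isomorphism $K^\times/p \cong \Hom(\Hom(\G_K, \Z/p), \Br(K)[p])$.

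The main obstacle I anticipate is not any single computation but the conceptual matching in this last step. Local class field theory most naturally yields a \emph{symmetric} perfect duality $A \cong V^\vee$ between $A$ and the \emph{continuous} dual of $V$, whereas the proposition demands the asymmetric-looking identification of $V$ with the \emph{full} algebraic dual $\Hom(A, \Z/p)$. Reconciling the two is exactly what the profiniteness of $V$ accomplishes: because $V$ is compact, $V^\vee$ is discrete and its algebraic and continuous duals coincide, so $\Hom(A, \Z/p) = \Hom(V^\vee, \Z/p) \cong V$. The two ingredients making this go through are the translation of the cohomological symbol $(\chi, a)$ into norm conditions via Lemma \ref{lem:kernel-pairing-norm}, and the existence theorem guaranteeing that every continuous functional on $V$ comes from a $\Z/p$-extension, hence from a character.
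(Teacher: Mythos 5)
Your proof is correct, but it takes a genuinely different route from the paper's. The paper never argues via the existence theorem: it instead invokes the reciprocity exact sequence $0 \to K^\times \to \G_K^{\mathrm{ab}} \to \hat\Z/\Z \to 0$, deduces (since $\hat\Z/\Z$ is torsion-free and divisible) that $K^\times/p \to \G_K^{\mathrm{ab}}/p$ is an isomorphism, applies Pontryagin duality to $\G_K^{\mathrm{ab}}/p$, and then quotes the compatibility of the cyclic-algebra pairing with the reciprocity map (Serre, \emph{Corps Locaux}, Ch.~XIV, §1, Prop.~3, or NSW Prop.~7.2.12) to identify the pairing-induced map with this composite of isomorphisms. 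You avoid both the reciprocity map and that compatibility citation entirely: your only cohomological input is Lemma \ref{lem:kernel-pairing-norm}, and the class-field-theoretic content is repackaged as the norm-index equality $[K^\times : N_{L/K}L^\times] = p$ (giving injectivity of $\Hom(\G_K,\Z/p) \to (K^\times/p)^\vee$) plus the existence theorem (giving surjectivity), with Pontryagin double duality applied to the profinite $\F_p$-vector space $K^\times/p$ rather than to $\G_K^{\mathrm{ab}}/p$. What the paper's route buys is brevity, at the cost of citing the precise compatibility formula $\mathrm{inv}(\chi,a) = \chi((a,K))$; what your route buys is self-containedness relative to the paper's toolkit and an explicit explanation of why the asymmetric target $\Hom(\Hom(\G_K,\Z/p),\Br(K)[p])$ (a full algebraic dual) is the correct one, namely the compactness of $K^\times/p$ -- a point the paper leaves implicit and which matters in characteristic $p$, where $K^\times/p$ is infinite. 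Two hinge points you use and should keep visible: the functionals $(\chi,\cdot)$ are continuous because norm groups of $\Z/p$-extensions are \emph{open} (not merely of finite index -- in characteristic $p$ these differ), and the existence theorem matches open index-$p$ subgroups with $\Z/p$-extensions, so that the kernel of a continuous functional, being open, is indeed a norm group.
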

\begin{proof}
  This is a consequence of local class field theory.
  Firstly, there is a canonical isomorphism $\operatorname{inv}_K \colon \Br(K) \xrightarrow{\sim} \Q/\Z$, and thus $\Br(K)[p] \cong \frac{1}{p}\Z/\Z \cong \Z/p$ \cite[Corollary 7.1.6]{NSW}.
  
  Secondly, class field theory provides the reciprocity homomorphism \[ (\cdot, K) \colon K^\times \to \G_K/[\G_K, \G_K] = \G_K^{\mathrm{ab}},\] which fits into the following exact sequence of abelian groups \cite[Theorem 7.2.11]{NSW} (cf.~also \cite[Chapitre XIII, §4, Remarque after Proposition 13]{Serre_corps-locaux}):
  \[ 0 \to K^\times \to \G_K^{\mathrm{ab}} \to \hat\Z/\Z \to 0 \]
  Since the group $\hat\Z/\Z$ is torsion-free and divisible, a simple diagram chase shows that the induced homomorphism
  $K^\times/p \to \G_K^{\mathrm{ab}}/p$ is an isomorphism.
  (Alternatively, take the tensor product of the sequence above with the abelian group $\Z/p$, and use that $\operatorname{Tor}^\Z_1(\hat\Z/\Z, \Z/p)$ and $(\hat\Z/\Z)/p$ both vanish, again since $\hat\Z/\Z$ is torsion-free and divisible.)
  By Pontryagin duality, we further have a canonical isomorphism $\G_K^{\mathrm{ab}}/p \cong \Hom(\Hom(\G_K^{\mathrm{ab}}/p, \Z/p), \Z/p) = \Hom(\Hom(G_K, \Z/p), \Z/p)$.

  We obtain a diagram
  \[ \xymatrix{
      K^\times/p \ar[d] \ar[r] & \Hom(\Hom(\G_K, \Z/p), \Z/p) \\
      \Hom(\Hom(\G_K, \Z/p), \Br(K)[p]) \ar[ur] &
    }\]
  where the left arrow is induced by the pairing, the top arrow is induced by the reciprocity map and Pontryagin duality, and the diagonal is induced by the canonical isomorphism $\operatorname{inv}_K$.
  The diagram commutes, see \cite[Chapitre XIV, §1, Proposition 3]{Serre_corps-locaux} or \cite[Proposition 7.2.12]{NSW}.
  Since the top and the diagonal arrow are isomorphisms, the same must be true for the left arrow.
\end{proof}

A basic observation concerning fields satisfying the axioms of Theorem \ref{thm:intro-basic} is the following.
\begin{lemma}\label{lem:char-p}
  Suppose $K$ satisfies $\Br(K)[p] \neq 0$ and $\G_K$ has $p$-cohomological dimension at most $1$.
  Then $K$ has characteristic $p$.
\end{lemma}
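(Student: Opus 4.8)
The plan is to argue by contradiction: I assume $\kar K \neq p$ and show that, together with $\cd_p(\G_K) \leq 1$, this forces $\Br(K)[p] = 0$, contradicting the hypothesis $\Br(K)[p] \neq 0$. The characteristic assumption enters in exactly one place, namely to make the Kummer sequence available, so that is where I would begin. Since $\kar K \neq p$, the $p$-th power map on $(K^{\mathrm{sep}})^\times$ is surjective — for any $a \in (K^{\mathrm{sep}})^\times$ the polynomial $X^p - a$ is separable, its derivative $pX^{p-1}$ being nonzero, and hence splits over $K^{\mathrm{sep}}$ — while its kernel is precisely $\mu_p \subseteq (K^{\mathrm{sep}})^\times$. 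This produces the short exact sequence of $\G_K$-modules
\[ 1 \to \mu_p \to (K^{\mathrm{sep}})^\times \xrightarrow{(\cdot)^p} (K^{\mathrm{sep}})^\times \to 1. \]

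Next I would pass to the long exact sequence in Galois cohomology and read off the relevant segment
\[ H^2(\G_K, \mu_p) \to \Br(K) \xrightarrow{\times p} \Br(K), \]
using the identification $\Br(K) = H^2(\G_K, (K^{\mathrm{sep}})^\times)$. The kernel of multiplication by $p$ on $\Br(K)$, which is by definition $\Br(K)[p]$, is then a quotient of $H^2(\G_K, \mu_p)$. Finally, $\mu_p$ is a finite $p$-torsion $\G_K$-module, so the assumption $\cd_p(\G_K) \leq 1$ yields $H^2(\G_K, \mu_p) = 0$ immediately from the definition of $p$-cohomological dimension. Hence $\Br(K)[p] = 0$, which is the desired contradiction, and therefore $\kar K = p$.

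I do not anticipate a genuine obstacle here, as every step is a routine consequence of the Kummer sequence and the definition of $\cd_p$. The one point worth flagging is that the hypothesis $\kar K \neq p$ is used in an essential way to guarantee the surjectivity of $(\cdot)^p$ and the exactness of the Kummer sequence; in characteristic $p$ the map $(\cdot)^p$ is purely inseparable with trivial kernel, so this route is simply unavailable, which is consistent with the fact that the conclusion is precisely $\kar K = p$.
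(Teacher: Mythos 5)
Your proof is correct and takes essentially the same route as the paper: the paper simply cites \cite[Corollary 4.4.5]{GilleSzamuely_2nd} for the identification $\Br(K)[p] \cong H^2(\G_K, \mu_p)$ in characteristic different from $p$, which is precisely the Kummer-sequence argument you write out by hand, and then concludes from $\cd_p(\G_K) \leq 1$ exactly as you do. The only cosmetic difference is that the cited result gives an isomorphism where your long-exact-sequence argument gives a surjection $H^2(\G_K,\mu_p) \twoheadrightarrow \Br(K)[p]$, which suffices equally well.
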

\begin{proof}
  If $K$ did not have characteristic $p$, we would have $\Br(K)[p] = H^2(\G_K, \mu_p)$ \cite[Corollary 4.4.5]{GilleSzamuely_2nd}, but the left-hand side is non-zero, and $\G_K$ has $p$-cohomological dimension at most $1$.
\end{proof}
The hypothesis on $p$-cohomological dimension is satisfied by the absolute Galois group $\G_{\F_q\pow{t}}$,
as indeed it is by the absolute Galois group of any field of characteristic $p$ \cite[Corollary 6.1.3]{NSW}.
We later give a strengthened version of Lemma \ref{lem:char-p}, only requiring Galois-theoretic information on a certain quotient of $\G_K$.
Since this is more complicated to prove, and not needed for Theorem \ref{thm:intro-basic},
we defer it to Lemma \ref{lem:char-p-sharpened}.

Let us now discuss the rephrasing of Theorem \ref{thm:intro-basic} as Theorem \ref{thm:intro-katoesque}, using Kato's cohomology groups $H^i(K, \Z/p(j))$.
For $K$ of characteristic not $p$, these groups are defined as Galois cohomology groups $H^i(\G_K, \mu_p^{\otimes j})$ and in particular $H^2(K, \Z/p(1)) \cong \Br(K)[p]$ \cite[p.~143]{Kato_HassePrinciple}.
By Lemma \ref{lem:char-p}, we may therefore assume for the purposes of comparing the axioms of Theorem \ref{thm:intro-basic} and Theorem \ref{thm:intro-katoesque} that $K$ has characteristic $p$.

We may then take it as the definition of these groups that $H^i(K, \Z/p(i)) = \KM_i(K)/p$ and $H^{i+1}(K, \Z/p(i)) = H^1(\G_K, \KM_i(K^{\mathrm{sep}})/p)$,
where the last group is a Galois cohomology group, and $\KM_\ast$ denotes the graded Milnor $K$-ring.
(See \cite[Appendix A, p.~152]{GaribaldiMerkurjevSerre_RostInvariants} and the Bloch--Kato--Gabber Theorem, cf.\ \cite[p.~149, (1.2.1)]{Kato_HassePrinciple}.)

This means we have $H^1(K, \Z/p(1)) = \KM_1(K)/p = K^\times/p$,
$H^1(K, \Z/p(0)) = H^1(\G_K, \Z/p) = \Hom(\G_K, \Z/p)$, and $H^2(K, \Z/p(1)) = H^1(\G_K, {K^{\mathrm{sep}}}^\times/p) \cong \Br(K)[p]$ (canonically) \cite[Appendix A, Example A.3~(2)]{GaribaldiMerkurjevSerre_RostInvariants}.
Since $H^1(K, \Z/p(1)) = K^\times/p$ embeds into the Galois cohomology group $H^0(\G_K, {K^{\mathrm{sep}}}^\times/p)$,
the cup product in Galois cohomology yields a bilinear pairing
\[ H^1(K, \Z/p(0)) \times H^1(K, \Z/p(1)) \to H^2(K, \Z/p(1)) ,\]
cf.\ \cite[Appendix A, (A.5)]{GaribaldiMerkurjevSerre_RostInvariants}.
Standard facts
(namely the compatibility of the cup product with the connecting homomorphisms in long exact sequences)
show that this pairing agrees up to sign with the pairing into $\Br(K)[p]$ defined above.
This gives the equivalence of axioms (Pair) and (Pair').

To establish the equivalence of Theorem \ref{thm:intro-basic} and Theorem \ref{thm:intro-katoesque},
we also need to translate the axiom that $H^2(K, \Z/p(2)) = \KM_2(K)/p$ vanishes into the statement that $[K : K^p] \leq p$.
Consider two elements $a, b \in K^\times$.
These are $p$-dependent, meaning that $[K^p(a,b) : K^p] \leq p$,
if and only if the elements $da$ and $db$ of the vector space of absolute Kähler differentials $\Omega_K$ are linearly dependent,
which is the case if and only if the differential form $da \wedge db \in \Omega^2_K$ vanishes,
cf.\ \cite[Proposition~A.8.11]{GilleSzamuely_2nd} or \cite[Chapitre V, No 2, §13, Théorème 1~a)]{Bourbaki_algebre-4567}.
By the Bloch--Kato--Gaber Theorem \cite[Theorem~9.5.2]{GilleSzamuely_2nd},
the group $\KM_2(K)/p$ naturally embeds into $\Omega^2_K$, from which we deduce
that $da \wedge db \in \Omega^2_K$ vanishes if and only if the symbol $\{ a, b \} \in \KM_2(K)/p$ vanishes.
This shows that the group $\KM_2(K)/p$ vanishes if and only if any two $a, b \in K^\times$ are $p$-dependent,
i.e.\ if and only if $[K : K^p] \leq p$.

\section{Existence of a valuation}
\label{sec:exists-valn}

Let $K$ be a field with $\G_K \cong \G_{\F_q\pow{t}}$.
The goal of this section is to show that $K$ carries a non-trivial henselian valuation $v$ whose residue field $Kv$ is not $p$-closed,
i.e.\ such that $Kv$ has a $\Z/p$-extension or equivalently the maximal pro-$p$ quotient $\G_{Kv}(p)$ is not trivial.

\begin{remark}\label{rem:EF-mistake}
This result follows in principle from \cite[Proposition 4.3~(d)]{EfratFesenko}.
However, the proof of this proposition relies on \cite[Lemma 4.2]{EfratFesenko}, a lemma concerning profinite groups, which is false.
The lemma in particular claims that for a profinite group $H$ such that
the $l$-Sylow subgroups of $H$ are isomorphic to $\Z_l$
for every prime number $l$,
the maximal prime-to-$p$-quotient $H(p')$ is isomorphic to $\hat\Z/\Z_p$
for every $p$.

To see that this is false,
consider the profinite group $H = \Z_2 \ltimes \prod_{l \neq 2} \Z_l$, where the product is over odd primes $l$, and $x \in \Z_2$ acts on $\prod_{l \neq 2} \Z_l$ as multiplication by $(-1)^x$.
Then every Sylow subgroup of $H$ is procyclic.
However, for $p \neq 2$, the maximal prime-to-$p$ quotient $H(p')$ is the non-abelian $\Z_2 \ltimes \prod_{l \neq 2, p} \Z_l$, which contradicts \cite[Lemma 4.2]{EfratFesenko}.
One can also check that the maximal prime-to-$2$ quotient $H(2')$ is the trivial group for a further contradiction.
The mistake in the proof of \cite[Lemma 4.2]{EfratFesenko} lies in an erroneous statement concerning the canonical projection $\hat{F} \to \hat{F}(\mathrm{ab}, p')$ in the notation there, where $\hat{F}$ is a free profinite group.

Although parts of the proof of \cite[Proposition 4.3]{EfratFesenko} could be salvaged, we use this opportunity to give a new full proof of \cite[Proposition 4.3]{EfratFesenko}, or rather of a slightly strengthened formulation.
\end{remark}

\begin{proposition}\label{prop:EF-replacement}
  There exists a non-trivial henselian valuation $v$ on $K$ such that $Kv$ has characteristic $p$, $(vK : l\cdot vK) = l$ for every prime number $l \neq p$,
  the maximal prime-to-$p$ quotient $\G_{Kv}(p')$ is isomorphic to $\hat{\Z}/\Z_p$,
  and $Kv \cap \overline{\F_p} \cong \F_q$.
\end{proposition}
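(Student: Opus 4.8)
The plan is to detect the valuation one prime at a time, using the rich prime-to-$p$ structure of $\G := \G_{\F_q\pow{t}}$, and then to amalgamate the resulting valuations into a single henselian valuation whose invariants can be read off from the fundamental exact sequence of a henselian valued field. First I would record the relevant group theory: since the wild inertia is pro-$p$, for each prime $\ell \neq p$ the maximal pro-$\ell$ quotient $\G(\ell)$ coincides with that of the tame quotient $\langle\sigma,\tau\mid\sigma\tau\sigma^{-1}=\tau^q\rangle$ and equals $\Z_\ell\rtimes\Z_\ell$, the residual Frobenius $\sigma$ acting on the inertia generator $\tau$ by multiplication by $q\in\Z_\ell^\times$. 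As $q=p^n>1$ is never $1$ in $\Z_\ell$, this action is non-trivial and $\G(\ell)$ is a non-abelian metabelian pro-$\ell$ group with a canonical abelian closed normal subgroup.

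For each $\ell\neq p$ I would then apply the Galois-theoretic criterion for the existence of henselian valuations furnished by the theory of rigid elements \cite{Ware_rigid,ArasonElmanJacob_rigid}, in the form of \cite[§5.4]{EnglerPrestel} or \cite[Chapter 26]{Efrat_ValuationsOrderingsMilnorKTheory}: a field whose maximal pro-$\ell$ Galois group is non-abelian with a non-trivial abelian closed normal subgroup admits a non-trivial $\ell$-henselian valuation with residue characteristic $\neq\ell$ and value group that is not $\ell$-divisible. Since $\G_K(\ell)\cong\G(\ell)\cong\Z_\ell\rtimes\Z_\ell$ has exactly this shape, this produces such a valuation $v_\ell$ on $K$ for every $\ell\neq p$ (adjoining $\mu_\ell$ where the criterion requires it, which is harmless since $[K(\mu_\ell):K]$ is prime to $\ell$). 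To merge these I would use that $K$ is not separably closed, as $\G_K\cong\G\neq 1$; hence by F.~K.~Schmidt's theorem $K$ admits no two independent henselian valuations, all its henselian valuations are pairwise comparable, and there is a canonical henselian valuation $v$ on $K$, which I would fix once and for all and show to carry the $\ell$-local data of each $v_\ell$.

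By construction $v$ has residue characteristic different from every $\ell\neq p$, so $\kar(Kv)\in\{0,p\}$. To exclude $0$ I would argue group-theoretically: the pro-$p$ Sylow subgroup of $\G_{\F_q\pow{t}}$ is a free pro-$p$ group of infinite rank (as $\kar=p$ forces $\cd_p\leq 1$, cf.\ the discussion after Lemma \ref{lem:char-p}, while $H^1(\G_K,\Z/p)$ is infinite), and such a group has no non-trivial abelian closed normal subgroup. Were $\kar(Kv)=0$, the inertia of $v$ would be abelian with a non-trivial part $\Z_p(1)$ coming from the value group, sitting normally inside this Sylow --- a contradiction. Hence $\kar(Kv)=p$, and the fundamental exact sequence $1\to I\to\G_K\to\G_{Kv}\to 1$ applies with tame inertia $I/I_{\mathrm{w}}\cong\prod_{\ell\neq p}\Z_\ell(1)$. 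Matching $\ell$-ranks identifies $\dim_{\F_\ell}(vK/\ell\, vK)$ with the inertia-rank $1$ of the factor $\Z_\ell$ in $\G(\ell)=\Z_\ell\rtimes\Z_\ell$, giving $(vK:\ell\cdot vK)=\ell$, and the induced quotient yields $\G_{Kv}(p')\cong\hat\Z/\Z_p$.

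The main obstacle I anticipate is pinning down the constant field precisely as $Kv\cap\overline{\F_p}=\F_q$, rather than merely some finite field: the absolute Galois group $\G_{\F_{p^m}}\cong\hat\Z$ is independent of $m$, so $\G_{Kv}(p')$ cannot detect $q$ on its own. Instead I would read $q$ off from the Galois \emph{action}: a Frobenius lift in $\G_{Kv}$ acts on the tame inertia $\prod_{\ell\neq p}\Z_\ell(1)$ through the cyclotomic character, i.e.\ by multiplication by the cardinality of $Kv\cap\overline{\F_p}$, and comparing this with the relation $\sigma\tau\sigma^{-1}=\tau^q$ transported from $\G_{\F_q\pow{t}}$ forces $|Kv\cap\overline{\F_p}|=q$. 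That this constant field is genuinely finite --- so that it equals $\F_q$ --- follows from $\G_{Kv}(p')$ being procyclic of rank one together with the action being by the finite exponent $q$. A secondary pitfall, highlighted by the false \cite[Lemma 4.2]{EfratFesenko} recalled in Remark \ref{rem:EF-mistake}, is to carry out the amalgamation of the $v_\ell$ \emph{without} invoking any incorrect structural description of prime-to-$p$ quotients; the comparability supplied by F.~K.~Schmidt's theorem is precisely what renders this step legitimate.
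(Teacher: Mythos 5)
Your opening group-theoretic claim is false, and everything else is built on it. For a prime $\ell \neq p$, the maximal pro-$\ell$ quotient of $\G_{\F_q\pow{t}}$ is \emph{not} $\Z_\ell \rtimes \Z_\ell$ in general. In any pro-$\ell$ quotient, the image $t$ of the tame generator $\tau$ topologically generates a procyclic normal subgroup on which the image of $\sigma$ acts by multiplication by $q$; on a finite quotient $\Z/\ell^k$ ($k \geq 1$) this automorphism has order divisible by the multiplicative order of $q$ modulo $\ell$, which is $>1$ and prime to $\ell$ whenever $\ell \nmid q-1$, whereas every conjugation in a finite $\ell$-group has $\ell$-power order. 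Hence $t$ dies in every finite $\ell$-quotient, and $\G_{\F_q\pow{t}}(\ell) \cong \Z_\ell$ is abelian and procyclic for all $\ell \nmid q-1$. (Equivalently, the semidirect product $\Z_\ell \rtimes_q \Z_\ell$ you describe does not even exist as a topological group unless $q$ lies in the pro-$\ell$ part of $\Z_\ell^\times$.) In the extreme case $q = 2$ this happens at \emph{every} prime $\ell \neq p$, so your strategy produces no valuation at any prime; for general $q$ it works only at the finitely many $\ell$ dividing $q-1$. A procyclic maximal pro-$\ell$ Galois group carries no valuation-theoretic information, so the rigid-element machinery has nothing to act on. Note the irony: this is an error of exactly the same nature as the one in \cite[Lemma 4.2]{EfratFesenko} that Remark \ref{rem:EF-mistake} corrects -- inertia need not survive into pro-$\ell$ or prime-to-$p$ quotients, precisely because the Frobenius action can force it to collapse.

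There are further gaps even at the good primes $\ell \mid q-1$. The rigid-element criteria require $\mu_\ell$ in the ground field, so they yield an $\ell$-henselian valuation on $K(\mu_\ell)$ (or on the fixed field of an $\ell$-Sylow subgroup, whose absolute Galois group \emph{is} always a non-abelian $\Z_\ell \rtimes \Z_\ell$), not on $K$; descending such a valuation to $K$ is a genuine step you do not address. More seriously, F.~K.~Schmidt's theorem concerns \emph{henselian} valuations and cannot amalgamate $\ell$-henselian valuations for varying $\ell$ into one henselian valuation on $K$; that amalgamation is precisely the hard content of \cite[Theorem 4.1]{EfratFesenko} (alternatively Koenigsmann's theorem on tamely branching valuations, \cite[Theorem 5.4.3]{EnglerPrestel}), which the paper simply cites at this point rather than reproves. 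Your endgame -- locating the inertia inside the tame quotient, splitting off the residue extension, and reading $q$ from the Frobenius action on inertia -- is sound in outline and close to what the paper actually does via \cite[Corollary 7.5.7~(ii)]{NSW}, a splitting argument showing $\Gal(Lv/Kv)$ is torsion-free, and the equivariant isomorphism of \cite[Corollary 16.2.7~(c)]{Efrat_ValuationsOrderingsMilnorKTheory}. But without a correct source for the henselian valuation itself, the proposal does not prove the proposition.
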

\begin{proof}
  The existence of a henselian valuation $v$ with $(vK : l \cdot vK) = l$ for every $l \neq p$ and residue characteristic $p$ is \cite[Theorem 4.1]{EfratFesenko}.

  (We mention in passing that this also follows very neatly from Koenigsmann's characterisation of ``tamely branching valuations'' as presented in \cite[Theorem 5.4.3]{EnglerPrestel}:
  Indeed, by the theorem the canonical henselian valuation $v$ on $K$ must be tamely branching at all $l \neq p$ since the same is true for $\F_q\pow{t}$ and this is a Galois-theoretic condition;
  the residue field $Kv$ must have characteristic $p$ by the argument of \cite[Theorem 4.1~(b)]{EfratFesenko};
  and since the abelian normal subgroups of $l$-Sylow subgroups of $\G_{\F_q\pow{t}}$ are procyclic, it follows that $[vK : l\cdot vK] = l$ from ramification theory.)

  It remains to prove the assertions about $\G_{Kv}(p')$ and $Kv \cap \overline{\F_p}$.
  We extensively use the theory of ramification and inertia subgroups as presented for instance in \cite[Chapter 15]{Efrat_ValuationsOrderingsMilnorKTheory}.
  Let $\sigma \colon \G_{\F_q\pow{t}} \to \G_K$ be an isomorphism, and write $T$, $V$ for the inertia and ramification subgroups of $\G_{\F_q\pow{t}}$.
  Let $L/K$ be the fixed field of $\sigma(V) \leq \G_K$.
  Then $L/K$ is a Galois extension with Galois group isomorphic to $\G_{\F_q\pow{t}}/V$, so it is generated as a profinite group by two elements $\sigma$, $\tau$ with the sole relation $\sigma\tau\sigma^{-1} = \tau^q$, i.e.\ $\Gal(L/K)$ is the semi-direct product of $\langle\sigma\rangle \cong \hat\Z$ and $\langle\tau\rangle \cong \hat\Z/\Z_p$ \cite[Theorem 7.5.3]{NSW}.
  Furthermore, the absolute Galois group of $L$ is isomorphic to $V$, and thus a free pro-$p$ group.
  In particular, for every prime number $l \neq p$ the value group $vL$ (where we also write $v$ for the unique prolongation of the valuation $v$ on $K$ to $L$) is $l$-divisible.

  The relative ramification subgroup $R$ of $\Gal(L/K)$ is a normal pro-$p$-subgroup.
  The $p$-Sylow subgroups of $\Gal(L/K) = \langle \sigma, \tau\rangle$ are procyclic, so $R$ is abelian.
  By \cite[Corollary 7.5.7~(ii)]{NSW}, $R$ must be contained in $\langle\tau\rangle \cong \hat\Z/\Z_p$, which has no pro-$p$-part.
  Therefore $R$ is trivial, i.e.\ the extension $L/K$ is tamely ramified.
  
  Thus the relative inertia group $I$ of $L/K$ is an abelian normal subgroup of $\Gal(L/K) = \langle \sigma, \tau \rangle$.
  By \cite[Corollary 7.5.7~(ii)]{NSW} once more, $I \leq \langle \tau\rangle \cong \hat\Z / \Z_p$.
  We have the so-called ramification pairing
  \[ I \times vL/vK \to \mu(Lv) \]
  where $\mu(Lv)$ is the group of roots of unity in the residue field $\mu(Lv)$,
  see \cite[Theorem 16.2.6]{Efrat_ValuationsOrderingsMilnorKTheory}.
  (In the notation there, observe that $L$ is the ramification subfield of $L/K$ with respect to $v$ since
  $L/K$ is tamely ramified, and the value group $vK$ agrees with the value group of the relative inertia field
  since $L/K$ is separable, cf.\ the diagram on \cite[p.~149]{Efrat_ValuationsOrderingsMilnorKTheory}.)
  The ramification pairing is bilinear with trivial left and right kernel,
  and it is continuous (where $vL/vK$ and $\mu(Lv)$ carry the discrete topology and $I$ its profinite topology).
  Since $vL$ is $l$-divisible for every prime number $l \neq p$, but $vK$ is not, the torsion group
  $vL/vK$ contains elements of arbitrary $l$-power order.
  The triviality of the right kernel of the pairing then implies that $\mu(Lv)$ contains elements of arbitrary $l$-power order,
  and furthermore that the profinite group $I$ must have non-trivial $l$-Sylow subgroup.
  It follows that Sylow subgroups of the quotient $\langle\tau\rangle/I$ are torsion, since the $l$-Sylow subgroup of both $\langle\tau\rangle$ and $I$ is isomorphic to $\Z_l$ for every $l \neq p$.

  We have $\Gal(Lv/Kv) \cong \langle\sigma,\tau\rangle/I$.
  In particular the $p$-Sylow subgroups of $\Gal(Lv/Kv)$ are isomorphic to $\Z_p$, so $\cd_p \Gal(Lv/Kv) = 1$.
  Since $\G_{Lv}$ is a pro-$p$-group as a quotient of $\G_L$, it follows from \cite[Theorem 3.5.6]{NSW} that the exact sequence
  \[ 1 \to \G_{Lv} \to \G_{Kv} \to \Gal(Lv/Kv) \to 1 \]
  splits.
  Hence $\Gal(Lv/Kv)$ is isomorphic to a subgroup of $\G_{Kv}$ and therefore torsion-free.
  Therefore the subgroup $\langle\tau\rangle/I$ of $\langle\sigma,\tau\rangle/I \cong \Gal(Lv/Kv)$ is also torsion-free.
  On the other hand, we have seen above that its Sylow subgroups are torsion,
  so we must have $I = \langle\tau\rangle$.
  Thus $\Gal(Lv/Kv) \cong \langle\sigma,\tau\rangle/I \cong \hat\Z$.
  Since $\G_{Lv}$ is pro-$p$, we deduce $\G_{Kv}(p') = \Gal(Lv/Kv)(p') = \hat\Z/\Z_p$.

  It remains to prove that $Kv \cap \overline{\F_p} \cong \F_q$.
  We saw above in our analysis of $\mu(Lv)$ that $Lv$ contains $l^s$-roots of unity for every prime number $l \neq p$ and every $s \geq 1$.
  In other words, $Lv$ contains the algebraic closure $\overline{\F_p}$.

  The ramification pairing induces an isomorphism $I \cong \Hom(vL/vK, \mu(Lv))$ \cite[Corollary~16.2.7~(c)]{Efrat_ValuationsOrderingsMilnorKTheory},
  where the homomorphism group is endowed with the topology of pointwise convergence.
  This isomorphism is compatible with the action of $\langle\sigma\rangle \cong \Gal(Lv/Kv)$,
  where $\Gal(Lv/Kv)$ acts on the homomorphism group via its action on $\mu(Lv)$, and $\langle\sigma\rangle$ acts on $I$ by conjugation.

  The automorphism group $\operatorname{Aut}(\overline{\F_p}) = \Gal(\overline{\F_p}/\F_p)$ is isomorphic to $\hat\Z$ and generated
  by the Frobenius automorphism $x \mapsto x^p$.
  Say the image of $\sigma$ in $\Gal(Lv/Kv)$ acts on $\overline{\F_p}$ as $x \mapsto x^{p^n}$ for some $n \in \hat\Z$.
  Therefore this element of $\Gal(Lv/Kv)$ acts on the abelian profinite group $\Hom(vL/vK, \mu(Lv))$ as multiplication by $p^n$.
  Compatibility of the action with the isomorphism induced by the ramification pairing means that
  $\sigma$ must act in the same way on $I$ by conjugation.
  In other words, we have $\sigma\eta\sigma^{-1} = \eta^{p^n}$ for each $\eta \in I = \langle\tau\rangle$.
  However, by construction of $\langle\sigma,\tau\rangle$ as a semi-direct product we know that $\sigma\eta\sigma^{-1} = \eta^q$.
  This is only possible for all elements $\eta \in I = \langle\tau\rangle \cong \hat\Z/\Z_p$ if we have $p^n = q$ as elements of $(\hat\Z/\Z_p)^\times$.
  It follows that an element $x \in \overline{\F_p}$ lies in $Kv$, i.e.~is fixed by $\Gal(Lv/Kv)$,
  if and only if $x^{p^n} = x^q = x$.
  Therefore $\overline{\F_p} \cap Kv = \F_q$.
\end{proof}

Proposition \ref{prop:EF-replacement} recovers \cite[Proposition 4.3]{EfratFesenko}, and also immediately implies the following desired corollary.
\begin{corollary}\label{cor:val-exists}
  There exists a non-trivial henselian valuation $v$ on $K$ such that the residue field $Kv$ is not $p$-closed.
\end{corollary}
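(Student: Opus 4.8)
The plan is to read the corollary off directly from Proposition \ref{prop:EF-replacement}, for which the crucial input is the last assertion of that proposition, namely $Kv \cap \overline{\F_p} \cong \F_q$. The guiding idea is that the $\Z/p$-extension we seek should come from the \emph{constant field}: since $Kv$ meets $\overline{\F_p}$ in exactly $\F_q$, it cannot already contain the next constant extension $\F_{q^p}$, and adjoining it produces the required $\Z/p$-extension.

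First I would use Proposition \ref{prop:EF-replacement} to obtain a non-trivial henselian valuation $v$ on $K$ with $Kv$ of characteristic $p$ and $Kv \cap \overline{\F_p} \cong \F_q$. Since $\F_q \subsetneq \F_{q^p} \subseteq \overline{\F_p}$, intersecting inside $\overline{\F_p}$ gives $Kv \cap \F_{q^p} = \F_q$; in particular $Kv$ does not contain $\F_{q^p}$. Next I would invoke the standard fact that for the Galois extension $\F_{q^p}/\F_q$ of finite fields and the intermediate field $Kv \supseteq \F_q$, the compositum $Kv \cdot \F_{q^p}$ is Galois over $Kv$, with restriction giving an isomorphism $\Gal(Kv \cdot \F_{q^p} / Kv) \cong \Gal(\F_{q^p} / Kv \cap \F_{q^p}) = \Gal(\F_{q^p}/\F_q)$. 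As $\F_{q^p}/\F_q$ is cyclic of degree $p$, this exhibits a $\Z/p$-extension of $Kv$, so $Kv$ is not $p$-closed, which is precisely the claim.

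I do not anticipate a real obstacle, since the corollary is an immediate consequence of the proposition; the only subtlety is deciding \emph{which} part of the proposition to feed in. The headline conclusion $\G_{Kv}(p') \cong \hat\Z/\Z_p$ is a statement about the prime-to-$p$ quotient, and is therefore silent about the pro-$p$ part $\G_{Kv}(p)$; on its own it cannot produce a $\Z/p$-extension, so a naïve attempt to use it would stall. The fix is to use the companion conclusion about $Kv \cap \overline{\F_p}$ instead. (Alternatively, one could reach into the proof of Proposition \ref{prop:EF-replacement}, where it is shown that $\G_{Kv}$ surjects onto $\Gal(Lv/Kv) \cong \hat\Z$; since $\hat\Z$ surjects onto $\Z/p$, this again yields a $\Z/p$-extension. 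The constant-field argument above is preferable in that it relies only on the statement of the proposition.)
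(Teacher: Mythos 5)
Your proof is correct and is essentially the paper's own argument: the paper also derives the corollary from Proposition \ref{prop:EF-replacement} via the assertion $Kv \cap \overline{\F_p} \cong \F_q$, noting that the $\Z/p$-extension $\F_{q^p}/\F_q$ lifts to one of $Kv$. You have merely spelled out the compositum/linear-disjointness step that the paper leaves implicit.
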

\begin{proof}
  Let $v$ be the valuation from Proposition \ref{prop:EF-replacement}.
  Since $\F_q$ has a $\Z/p$-extension, so does $Kv$.
\end{proof}

We later give an alternative version of Corollary \ref{cor:val-exists},
requiring only weaker Galois-theoretic information,
but also only yielding a $p$-henselian valuation.
Since this is not needed for the basic result Theorem \ref{thm:intro-basic},
and the proof uses different techniques as it cannot rely on \cite[Theorem 4.1]{EfratFesenko},
we defer this to Proposition \ref{prop:val-exists-sharpened}.

\section{Consequences of the Brauer group axioms}
\label{sec:pinning-down}

Throughout this section, let $K$ be a field such that:
\begin{enumerate}
\item $K$ has countably many $\Z/p$-extensions;
\item $K$ has characteristic $p$;
\item the axioms (Brau) and (Pair) from Theorem \ref{thm:intro-basic} are satisfied.
\end{enumerate}

Artin--Schreier theory yields an isomorphism $\Hom(\G_K, \Z/p) \cong K/\wp(K)$,
where $\wp(K)$ is the image of the group homomorphism $\wp \colon K \to K, x \mapsto x^p - x$ \cite[Corollary 6.1.2]{NSW}.
Our first assumption can therefore also be phrased as asserting that the group $K/\wp(K)$ is countable.
The Artin--Schreier isomorphism yields a pairing $\pair{\cdot}{\cdot} \colon K/\wp(K) \times K^\times/p \to \Br(K)[p]$
from the pairing defined in Section \ref{sec:explaining}.
(This notation is also used in \cite[Chapitre XIV, §5]{Serre_corps-locaux}.)
We will also consider $\pair{\cdot}{\cdot}$ as a function defined on $K \times K^\times$.

In this section we show that certain valuations $v$ on $K$ satisfy very strong properties.
In the proof of Theorem~\ref{thm:intro-basic},
we will apply this to a valuation obtained
(under the hypothesis (Gal), beyond the standing assumptions of this section introduced above)
from the results of Section~\ref{sec:exists-valn}.
However, for the time being we develop the material
for arbitrary valuations $v$, of which we will generally only assume that they are non-trivial,
have non-$p$-closed residue field $Kv$,
and are \emph{$p$-henselian}.
This last standard condition, see \cite{ChatzidakisPerera_pHenselianity},
means that the maximal ideal $\mathfrak{m}_v$ of the valuation ring is contained in $\wp(K)$,
or equivalently that the valuation extends uniquely to every $\Z/p$-extension of $K$.
(We will meet the general condition of $l$-henselianity,
where $l$ is a prime not necessarily equal to the characteristic,
in Section~\ref{sec:sharpenings-quotients} below.)

\begin{lemma}\label{lem:pairing-props}
  For a $p$-henselian valuation $v$ on $K$, we have $\pair{\mathcal{O}_v}{1 + \mathfrak{m}_v} = 0$.
  More generally:
  For every $0 \neq x \in \mathcal{O}_v$, we have $\pair{x^{-1}\mathcal{O}_v}{1 + x\mathfrak{m}_v} = 0$.
\end{lemma}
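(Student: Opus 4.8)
The plan is to reduce both assertions to a statement about norms via Lemma~\ref{lem:kernel-pairing-norm} and then to carry out a computation in the unit filtration of the associated Artin--Schreier extension. I fix $0 \neq x \in \mathcal{O}_v$ throughout and treat the first assertion as the special case $x \in \mathcal{O}_v^\times$, in which $x^{-1}\mathcal{O}_v = \mathcal{O}_v$ and $1 + x\mathfrak{m}_v = 1 + \mathfrak{m}_v$, so that it suffices to prove the second assertion. First I would dispose of the trivial case: if $a \in x^{-1}\mathcal{O}_v$ happens to lie in $\wp(K)$, then the Artin--Schreier character attached to $a$ is zero and $\pair{a}{b} = 0$ for every $b$ by bilinearity. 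Hence I may assume $a \notin \wp(K)$, let $\chi$ be the corresponding non-zero character, and let $L = K(\wp^{-1}(a))$ be the associated $\Z/p$-extension. By Lemma~\ref{lem:kernel-pairing-norm} it then remains to show that every element of $1 + x\mathfrak{m}_v$ is a norm from $L/K$.

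Next I would control the ramification of $L/K$. Since $a \in x^{-1}\mathcal{O}_v$ we have $v(a) \geq -v(x)$, and since modifying $a$ by $\wp(K)$ does not change $L$, I would bring $a$ into Artin--Schreier normal form, so that either $a \in \mathcal{O}_v$ or the pole order $-v(a)$ is positive, prime to $p$, and at most $v(x)$. In the integral case (which covers the first assertion, where $v(x) = 0$) the relation $\mathfrak{m}_v \subseteq \wp(K)$ forces $\bar a \notin \wp(Kv)$, so $L/K$ is unramified of degree $p$ with separable residue extension $Kv(\wp^{-1}(\bar a))/Kv$. In the remaining case $L/K$ is ramified, but its conductor is bounded in terms of the pole order and hence, up to the usual normalisation, by $v(x)$.

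The heart of the argument is the norm computation on principal units. Writing $\sigma$ for a generator of $\Gal(L/K)$, one has $N_{L/K}(1 + \mu) = 1 + \operatorname{Tr}_{L/K}(\mu) + (\text{higher-order terms})$ for $\mu \in \mathfrak{m}_L$, so on the graded pieces of the unit filtration the norm is induced by the trace map $Lv \to Kv$, which is surjective because $Lv/Kv$ is separable. A filtration argument along these graded pieces then shows that every principal unit lying strictly deeper than the conductor is a norm from $L/K$; as $1 + x\mathfrak{m}_v$ consists of units $1 + xm$ with $v(xm) > v(x)$, these lie in the admissible range, and I would conclude $\pair{x^{-1}\mathcal{O}_v}{1 + x\mathfrak{m}_v} = 0$.

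The main obstacle is that this filtration computation is classically run over a \emph{complete}, or at least fully henselian, base field, whereas here $v$ is only assumed $p$-henselian, so that the successive approximations produced by the graded argument cannot naively be assembled into an exact norm. I expect to resolve this by exploiting that $p$-henselianity is precisely adapted to the extension at hand: the whole computation takes place inside the degree-$p$ extension $L \subseteq K(p)$, on which $v$ extends uniquely, and the solvability inputs reduce to Artin--Schreier equations $\wp(z) = (\text{element of } \mathfrak{m}_v)$, which $\mathfrak{m}_v \subseteq \wp(K)$ solves exactly. Alternatively I would pass to the henselisation $K^h$, where the computation is valid verbatim, and descend using that $\pair{a}{b}$ is a $p$-torsion class together with the fact that $p$-henselianity yields $K^h \cap K(p) = K$, so that the vanishing of such a class is detected faithfully on $K^h$. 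Making this reduction to the henselian (or complete) case fully rigorous, and tracking the conductor bound uniformly in the ramified situation, is where I expect the real work to lie.
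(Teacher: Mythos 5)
Your opening move --- reducing via Lemma~\ref{lem:kernel-pairing-norm} to showing that every element of $1+x\mathfrak{m}_v$ is a norm from $L=K(\wp^{-1}(a))$ --- is exactly the paper's first step, but the route you take from there has two genuine gaps, and your proposed repairs do not close them. First, the unit-filtration/conductor argument is not merely ``classically run over a complete field'': it intrinsically requires a limit process. Each graded step only yields $b \equiv N_{L/K}(u_1\cdots u_k)$ modulo deeper principal units, and the corrections $u_k$ live in $L$; without completeness the infinite product of corrections has no limit, and solving individual Artin--Schreier equations $\wp(z)=(\text{element of }\mathfrak{m}_v)$ inside $K$ does not address this at all. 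Your henselisation alternative founders on the descent step: knowing that $(\chi,b)$ dies in $\Br(K^h)$ means $b \in N_{LK^h/K^h}\bigl((LK^h)^\times\bigr)$, and to conclude $b \in N_{L/K}(L^\times)$ you need the norm-descent statement $K^\times \cap N_{LK^h/K^h}\bigl((LK^h)^\times\bigr) = N_{L/K}(L^\times)$, equivalently injectivity of $\Br(L/K) \to \Br(LK^h/K^h)$; this is not a formal consequence of $K^h \cap K(p)=K$ and is left unproved. Second, and more structurally, your ``normal form'' (pole order prime to $p$), conductor bounds and filtration indices tacitly assume $vK \cong \Z$. The lemma is stated, and must be used, for arbitrary $p$-henselian valuations: in the proof of Lemma~\ref{lem:discrete} it is applied under the hypothesis that $vK$ is $p$-divisible, in which case no element of value prime to $p$ exists and your case division never gets started.

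The paper's proof shows why none of this machinery is needed: for any $y \in K$, the element $\alpha+y$ has minimal polynomial $X^p - X - (a+\wp(y))$, so $N_{L/K}(\alpha+y) = a + \wp(y)$ and hence $N_{L/K}\left(\frac{\alpha+y}{\alpha}\right) = 1 + \wp(y)/a$. Given $b \in 1+x\mathfrak{m}_v$, the product $(b-1)a$ lies in $\mathfrak{m}_v$ (this is precisely where $v(a) \geq -v(x)$ enters), so $p$-henselianity in the form $\mathfrak{m}_v \subseteq \wp(K)$ produces $y$ with $\wp(y) = (b-1)a$ \emph{exactly}, whence $b = N_{L/K}((\alpha+y)/\alpha)$ on the nose. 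This closed-form witness for the norm requires no ramification theory, no filtration, no completion, and no hypothesis whatsoever on the value group; it is the one idea your proposal is missing, and it is what makes the lemma true in the generality in which it is later applied.
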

\begin{proof}
  Let $a \in x^{-1}\mathcal{O}_v$, i.e.\ $v(a) \geq -v(x)$, and consider the character $\chi \colon \G_K \to \Z/p$ associated to $a + \wp(K)$.
  Let $b \in 1 + x\mathfrak{m}_v$.
  We must show that $(\chi, b) = 0$.
  There is nothing to prove if $\chi = 0$, so suppose otherwise.
  Let $L/K$ be the $\Z/p$-extension associated to $\chi$, which is precisely the extension of $K$ generated by an element $\alpha$ with $\alpha^p - \alpha = a$.
  By Lemma \ref{lem:kernel-pairing-norm} we have to show that $b$ is a norm of $L/K$.
  For an element $y \in K$, we have $(\alpha+y)^p - (\alpha+y) - a - \wp(y) = 0$, so $X^p - X - a - \wp(y)$ is the minimal polynomial of $\alpha+y$ over $K$, and hence the norm of $\alpha+y$ is $a + \wp(y)$.
  The norm of $\frac{\alpha+y}{\alpha}$ is therefore $\frac{a + \wp(y)}{a} = 1 + \wp(y)/a$.
  Since $v$ is $p$-henselian,
  we can choose $y \in K$ with $\wp(y) = (b-1)a = \frac{b-1}{x} (ax) \in \mathfrak{m}_v$.
  Then $b = 1 + \wp(y)/a$ is a norm, as desired.
\end{proof}

\begin{lemma}\label{lem:imperfect-implies-uniformiser}
  Let $v$ be a non-trivial $p$-henselian valuation on $K$.
  If $Kv$ is imperfect, then the value group $vK$ is countable and has a minimal positive element.
\end{lemma}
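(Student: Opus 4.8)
The plan is to establish the two conclusions separately: countability of $vK$ by an elementary Artin--Schreier count, and the existence of a minimal positive element by a contradiction argument that genuinely uses the perfect pairing furnished by (Brau) and (Pair). I expect the second part to be the real obstacle.

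For countability I would exploit that in characteristic $p$ the set $\wp(K)$ is very thin at negative values: if $w=\wp(x)$ with $v(w)<0$ then necessarily $v(x)<0$, whence $v(w)=p\,v(x)\in p\cdot vK$ and, comparing leading terms, the residue of $w$ after normalising by an element of value $v(w)$ is a $p$-th power in $Kv$. I would then attach to each negative $\gamma\in vK$ a nonzero class $a_\gamma+\wp(K)$: when $\gamma\notin p\cdot vK$ any $a_\gamma$ of value $\gamma$ works, and when $\gamma=p\gamma'\in p\cdot vK$ I would invoke imperfection of $Kv$ to choose a unit $u$ with $\bar u\notin (Kv)^p$ and set $a_\gamma=u\pi'^p$ where $v(\pi')=\gamma'$, whose normalised residue $\bar u$ is not a $p$-th power. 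A comparison of leading terms shows these classes are pairwise distinct as $\gamma$ ranges over the negative cone, giving an injection of $\{\gamma\in vK:\gamma<0\}$ into $K/\wp(K)\cong\Hom(\G_K,\Z/p)$. Since the hypothesis of countably many $\Z/p$-extensions says precisely that this group is countable, and $vK$ is nontrivial and torsion-free, $vK$ is countable. The point worth isolating is that imperfection is exactly what produces classes at the $p$-divisible values $\gamma\in p\cdot vK$, where otherwise there would be none.

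For the minimal positive element I would argue by contradiction, assuming $vK$ has positive elements of arbitrarily small value, and aim to contradict the injectivity of the map $K^\times/p\to\Hom(\Hom(\G_K,\Z/p),\Br(K)[p])$ built into (Pair). Concretely, I would try to exhibit a principal unit $b=1+m$ that is not a $p$-th power yet satisfies $\pair{a}{b}=0$ for every $a\in K$, i.e.\ a non-$p$-th-power universal norm. Imperfection makes the first requirement cheap: for instance $b=1+u\pi$ with $\bar u\notin (Kv)^p$ and $v(\pi)>0$ is never a $p$-th power. The substance is the universal-norm property, which by Lemma~\ref{lem:kernel-pairing-norm} means $b\in N_{K(\wp^{-1}a)/K}$ for every $a\notin\wp(K)$; the usable tool is the explicit family $1+\wp(y)/a=N_{K(\wp^{-1}a)/K}\bigl((\alpha+y)/\alpha\bigr)$ extracted from the proof of Lemma~\ref{lem:pairing-props}, which requires no henselianity and reduces the norm condition to the solvability of $\wp(y)=ma$. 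The hard part --- and the step I expect to be the main obstacle --- is to convert the absence of a minimal positive element (equivalently, the presence of arbitrarily small positive values) into enough solvability of these equations that a single small $b$ can be realised as a norm simultaneously from all the ramified extensions attached to the $a$ with $v(a)<0$. This is precisely where the no-minimal-element hypothesis, which the countability argument did not use, must enter.
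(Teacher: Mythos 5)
Your first half (countability of $vK$) is correct and is essentially the paper's own argument: both of you inject the negative cone of $vK$ into the countable group $K/\wp(K)$ via Artin--Schreier classes manufactured from the imperfection of $Kv$, using that a negative-valuation element of $\wp(K)$ has value in $p\cdot vK$ and a $p$-th power residue after normalisation. Note, however, that the paper proves two stronger statements which its second half then consumes: (i) the uniform family $y_\gamma = u x_\gamma^p$ (with $\bar u \notin (Kv)^p$, $v(x_\gamma) = \gamma < 0$) is $\F_p$-linearly \emph{independent} in $K/\wp(K)$, not merely pairwise distinct; and (ii) the residue field $Kv$ itself is countable. Your sketch establishes neither.

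The genuine gap is the second half, which is the heart of the lemma, and you flag it yourself: the ``hard part'' you isolate is exactly the missing content, and your proposed route does not lead to it. You aim to contradict the \emph{injectivity} in (Pair) by exhibiting a non-$p$-th-power universal norm $b = 1+m$. The only production tool you have is the identity $1 + \wp(y)/a = N\bigl((\alpha+y)/\alpha\bigr)$, which reduces $\pair{a}{b} = 0$ to solvability of $\wp(y) = ma$ in $K$. Now $p$-henselianity (which is in fact needed even for the paper's proof, inherited from the section's context via Lemma \ref{lem:pairing-props}) solves this equation precisely when $v(ma) > 0$, i.e.\ $v(a) > -v(m)$; but a universal norm must also be a norm from the extensions attached to $a$ with $v(a) \leq -v(m)$, and in that range henselianity gives nothing, nor does density of $vK$ provide any mechanism for solving $\wp(y) = ma$. (A minor additional flaw: your claim that $1 + u\pi$ is never a $p$-th power is false as stated --- take $\pi = c^p/u$ with $v(c)>0$, so that $1 + u\pi = (1+c)^p$; one needs e.g.\ $v(\pi) \notin p\cdot vK$.) The paper's proof runs on a different engine and never constructs a universal norm: it uses the \emph{surjectivity} half of (Pair). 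From the linear independence of the classes $y_{\delta_n}$, where $\delta_n \nearrow 0$, it produces, for each of uncountably many $0$--$1$ functions $f_i$ that pairwise differ in infinitely many places, an element $z_i \in K^\times$ with $\pair{y_{\delta_n}}{z_i} \neq 0$ if and only if $f_i(n) = 1$. Since $K^\times/(K^{\times p}(1 + \mathfrak{m}_v))$ is countable --- this is where countability of both $vK$ and $Kv$ enters --- two indices satisfy $z_i/z_j = c^p(1+b)$ with $b \in \mathfrak{m}_v$, and then Lemma \ref{lem:pairing-props} kills $\pair{y_{\delta_n}}{1+b}$ for all large $n$: this step only requires $b$ to be a norm from extensions attached to elements of \emph{small} negative valuation, exactly the range henselianity does cover. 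Hence $f_i(n) = f_j(n)$ for all large $n$, contradicting the choice of the $f_i$. So the contradiction is a counting/pigeonhole argument against the choice of uncountably many functions, not the existence of a single element violating injectivity; to complete your plan you would need a genuinely new idea for the simultaneous-norm step, and neither your sketch nor the paper supplies one.
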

\begin{proof}
  Let us first argue that the field $Kv$ is countable.
  Pick $x \in K$ with $v(x) < 0$ and consider the $\F_p$-linear map
  \[ Kv/(Kv)^p \to K/(\wp(K) + x^p \mathfrak{m}_v), (y + \mathfrak{m}_v) + (Kv)^p \mapsto yx^p + \wp(K) + x^p\mathfrak{m}_v \]
  (where $y \in \mathcal{O}_v$).
  This map is well-defined since if $y \in z^p + \mathfrak{m}_v$ for some $z \in \mathcal{O}_v$,
  we have \[ yx^p \in z^px^p + x^p \mathfrak{m}_v = \wp(zx) + x^p \frac{z}{x^{p-1}} + x^p \mathfrak{m}_v \subseteq \wp(K) + x^p \mathfrak{m}_v .\]
  This map is also injective:
  Indeed, if $y \in \mathcal{O}_v$ is such that $yx^p \in \wp(K) + x^p \mathfrak{m}_v$,
  we have $y \in \mathfrak{m}_v + x^{-p}\wp(K)$,
  so there exists $z \in K$ with $y - x^{-p}(z^p-z) \in \mathfrak{m}_v$.
  Comparing valuations, we must have $v(z) \geq v(x)$,
  and so $x^{-p}z \in \mathfrak{m}_v$,
  yielding $y - (z/x)^p \in \mathfrak{m}_v$.
  This shows that $y + \mathfrak{m}_v \in Kv$ is a $p$-th power, proving that the map has trivial kernel.
  Since the codomain of the map is a quotient of of $K/\wp(K)$ and
  therefore countable by our standing assumption,
  the positive-dimensional $(Kv)^p$-vector space $Kv/(Kv)^p$ must also be countable.
  This shows that $(Kv)^p$ and therefore $Kv$ are countable.

  Let $a \in \mathcal{O}_v$ such that the residue of $a$ in $Kv$ is not a $p$-th power.
  For every $\gamma \in vK$, $\gamma < 0$, choose an element $x_\gamma$ of valuation $\gamma$, and set $y_\gamma  = a x_\gamma^p$.
  We claim that the images of the $y_\gamma$ in $K/\wp(K)$ are $\F_p$-linearly independent.
  Indeed, if they were not, then there would exist $\gamma_1 < \gamma_2 < \dotsb \gamma_n < 0$ and coefficients $a_2, \dotsc, a_n \in \F_p$, as well as an element $z \in K$ with $y_{\gamma_1} = a_2 y_{\gamma_2} + \dotsb + a_n y_{\gamma_n} +
  z^p - z$.
  Considering the valuation of each term, we must have $v(z) = v(y_{\gamma_1})/p = v(x_{\gamma_1})$.
  Dividing by $x_{\gamma_1}^p$ and taking residues, we obtain $\overline{a} = \overline{z/x_{\gamma_1}}^p$, contradicting the choice of $a$.
  This shows the linear independence claimed.

  In particular, it follows that $vK$ is countable since $K/\wp(K)$ is countable.
  Assume now that $vK$ does not have a minimal positive element, so $vK$ is densely ordered.
  We may then fix a sequence $\delta_1, \delta_2, \dotsc$ of negative elements of $vK$ converging to $0$.
  There exists a family of functions $f_i \in \N \to \{0, 1\}$, indexed by $i$ in some uncountable index set $I$, such that for $i \neq j$ the functions $f_i$ and $f_j$ differ in infinitely many places.
  (For instance, for every set $P$ of prime numbers, choose a function $f$ which is $1$ at each power of a prime in $P$ and $0$ everywhere else.)
  By the pairing axiom (Pair), and using the linear independence previously proved, there exist elements $z_i \in K^\times$, indexed by $i \in I$, such that $\pair{y_{\delta_n}}{z_i}$ is a non-zero element of $\Br(K)[p]$ if and only if $f_i(n) = 1$.

  The group $K^\times/(K^{\times p}(1+\mathfrak{m}_v))$ is countable as a group extension of $K^\times/(K^{\times p} \mathcal{O}_v^\times) \cong vK/p$
  by $(K^{\times p} \mathcal{O}_v^\times)/(K^{\times p}(1+\mathfrak{m}_v)) \cong \mathcal{O}_v^\times/(\mathcal{O}_v^{\times p}(1+\mathfrak{m}_v)) \cong (Kv)^\times/p$,
  both of which are countable groups by the previous steps.
  Therefore there must exist two indices $i \neq j$ such that $z_i/z_j \in K^{\times p} (1 + \mathfrak{m}_v)$, so we may write $z_i/z_j = a^p (1 + b)$ for suitable $a \in K^\times$, $b \in \mathfrak{m}_v$.

  Since the $\delta_n$ converge to $0$, there exists an $n_0$ such that for all $n \geq n_0$ we have $\delta_n > -v(b)$, and therefore
  \[ \pair{x_{\delta_n}}{z_i/z_j} = \pair{x_{\delta_n}}{1+b} = 0 \]
  by Lemma \ref{lem:pairing-props} (set $x = x_{\delta_n}^{-1}$ there).
  This forces $f_i(n) = f_j(n)$ for all $n \geq n_0$, contradicting our choice of functions $f_i$.
\end{proof}

\begin{lemma}\label{lem:perfect-implies-uniformiser}
  Let $v$ be a non-trivial $p$-henselian valuation on $K$ whose residue field $Kv$ is not $p$-closed.
  Suppose that $Kv$ is perfect and $vK$ has no non-trivial $p$-divisible convex subgroup.
  Then $vK$ is countable and has a minimal positive element.
\end{lemma}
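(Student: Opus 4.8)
The plan is to follow the strategy of Lemma~\ref{lem:imperfect-implies-uniformiser}, but with a different supply of Artin--Schreier classes, since the family $y_\gamma = a x_\gamma^p$ used there is unavailable once $Kv$ is perfect. Write $\Gamma = vK$. For each $\gamma \in \Gamma$ with $\gamma < 0$ and $\gamma \notin p\Gamma$, I would fix $x_\gamma \in K$ with $v(x_\gamma) = \gamma$, and claim that the classes $x_\gamma + \wp(K)$ are $\F_p$-linearly independent in $K/\wp(K)$. This is a pure valuation computation: any non-trivial $\F_p$-combination $\sum_i a_i x_{\gamma_i}$ (with the $\gamma_i$ distinct) has valuation $\min_i \gamma_i \in \Gamma_{<0} \setminus p\Gamma$, whereas for every $z \in K$ one has $v(\wp(z)) \geq 0$ when $v(z) \geq 0$ and $v(\wp(z)) = p\,v(z) \in p\Gamma$ otherwise; as $\Gamma_{<0}\setminus p\Gamma$ is disjoint from $\Gamma_{\geq 0}\cup p\Gamma$, adding any $\wp(z)$ leaves the valuation unchanged and in particular finite, so the combination falls outside $\wp(K)$. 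Since $K/\wp(K) \cong \Hom(\G_K, \Z/p)$ is countable, it follows that $\{\gamma < 0 : \gamma \notin p\Gamma\}$ is countable; by the symmetry $\gamma \mapsto -\gamma$ and $0 \in p\Gamma$, the whole set $\Gamma \setminus p\Gamma$ is countable. A non-trivial coset of $p\Gamma$ has the cardinality of $p\Gamma \cong \Gamma$ (as $\Gamma$ is torsion-free), so were $\Gamma$ uncountable, either $\Gamma = p\Gamma$ or $\Gamma\setminus p\Gamma$ would be uncountable; the latter is excluded, and $\Gamma = p\Gamma$ would make $\Gamma$ itself a non-trivial $p$-divisible convex subgroup, against hypothesis. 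Hence $\Gamma$ is countable.

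For the minimal positive element I would argue by contradiction, assuming $\Gamma$ densely ordered. The crux, and the step I expect to be the main obstacle, is a purely order-theoretic claim: elements of $\Gamma \setminus p\Gamma$ accumulate at $0$ from below, i.e.\ every interval $(-\epsilon, 0)$ with $\epsilon > 0$ contains some $\gamma \notin p\Gamma$. This is where the hypothesis on $p$-divisible convex subgroups re-enters. If instead $(-\epsilon_0, 0)\cap\Gamma \subseteq p\Gamma$ for some $\epsilon_0 > 0$, I would extract a non-trivial $p$-divisible convex subgroup. First, $H := \{\gamma \in \Gamma : n|\gamma| < \epsilon_0 \text{ for all } n \geq 1\}$, the largest convex subgroup of $\Gamma$ not containing $\epsilon_0$, lies in $p\Gamma$ and is closed under division by $p$ (if $\gamma \in H$ then $\gamma/p \in H$ by convexity), so $H = pH$; if $H \neq 0$ we are done. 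Otherwise the convex subgroup $C$ generated by $\epsilon_0$ has largest proper convex subgroup $H = 0$, hence is archimedean and embeds into $\R$; it cannot be cyclic, for a minimal positive element of $C$ would be one of $\Gamma$ by convexity, against density, so $C$ and therefore $pC$ are dense in $\R$. Then for any $\gamma_0 \in C$ the dense coset $\gamma_0 + pC$ meets $(-\epsilon_0, 0)$, placing some $\gamma_0 + pc$ into $(-\epsilon_0,0)\cap\Gamma \subseteq p\Gamma$; since this element has absolute value below $\epsilon_0$, its $p$-th part again lies in $C$, so $\gamma_0 + pc \in pC$ and thus $\gamma_0 \in pC$. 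Hence $C = pC$ is a non-trivial $p$-divisible convex subgroup, the desired contradiction.

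With the accumulation claim in hand, and using that $\Gamma$ is countable, I would fix a sequence $\delta_1, \delta_2, \dots$ of negative elements of $\Gamma\setminus p\Gamma$ converging to $0$, and run the pigeonhole argument of Lemma~\ref{lem:imperfect-implies-uniformiser} with the independent classes $x_{\delta_n} + \wp(K)$ in place of the $y_{\delta_n}$. Concretely, I would choose an uncountable family of functions $f_i \colon \N \to \{0,1\}$ pairwise differing in infinitely many places; by axiom (Pair), which exhibits $K^\times/p$ as the full dual $\Hom(K/\wp(K), \Br(K)[p])$, together with the independence just established, each $f_i$ is realised by some $z_i \in K^\times$ with $\pair{x_{\delta_n}}{z_i} \neq 0$ exactly when $f_i(n) = 1$. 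The quotient $K^\times/(K^{\times p}(1 + \mathfrak{m}_v))$ is countable, being an extension of the countable group $vK/p$ by $(Kv)^\times/p$, and the latter vanishes because $Kv$ is perfect; so two indices $i \neq j$ satisfy $z_i/z_j = a^p(1+b)$ with $a \in K^\times$ and $b \in \mathfrak{m}_v$. For all large $n$ we have $\delta_n > -v(b)$, and Lemma~\ref{lem:pairing-props} then yields $\pair{x_{\delta_n}}{1 + b} = 0$, hence $\pair{x_{\delta_n}}{z_i} = \pair{x_{\delta_n}}{z_j}$ and $f_i(n) = f_j(n)$ for all large $n$, contradicting the choice of the $f_i$. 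Therefore $\Gamma$ has a minimal positive element.
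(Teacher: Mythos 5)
Your proof is correct, but its decisive second half takes a genuinely different route from the paper's. For countability both arguments rest on producing $\F_p$-independent Artin--Schreier classes of negative, non-$p$-divisible valuation inside the countable group $K/\wp(K)$; the paper gets classes indexed by \emph{all} negative $\delta$ at once (multiplying a fixed $x$ of non-$p$-divisible valuation by the $p$-th powers $y_\delta^p$), while you index only by $\delta \notin p\cdot vK$ and then need your coset-cardinality argument to conclude --- both work. You also supply a complete order-theoretic proof of the accumulation claim (every interval around $0$ contains non-$p$-divisible elements), which the paper asserts in a single sentence; that is genuine gap-filling. The real divergence is the minimal-positive-element step. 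The paper constructs a \emph{single} functional $f$ with $f(\mathcal{O}_v)=0$ and $f(x_n)\neq 0$, realises it as $\pair{\cdot}{z}$, and then uses the hypothesis that $Kv$ is not $p$-closed: the associated unramified $\Z/p$-extension, Lemma \ref{lem:kernel-pairing-norm}, and the fact that norms from unramified extensions have $p$-divisible valuation force $v(z)\in p\cdot vK$, so $z$ can be moved into $\mathcal{O}_v^\times$ and then (by perfectness) into $1+\mathfrak{m}_v$, after which Lemma \ref{lem:pairing-props} gives the contradiction. You instead replay the uncountable-family pigeonhole of Lemma \ref{lem:imperfect-implies-uniformiser}: perfectness of $Kv$ makes $(Kv)^\times/p$ trivial, so countability of $vK$ already gives countability of $K^\times/(K^{\times p}(1+\mathfrak{m}_v))$, two of your uncountably many $z_i$ agree modulo that subgroup, and Lemma \ref{lem:pairing-props} finishes. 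What each buys: your argument never touches the ``not $p$-closed'' hypothesis (nor Lemma \ref{lem:kernel-pairing-norm} or any norm theory), so it in fact proves the lemma without that assumption and keeps the perfect and imperfect cases structurally parallel; the paper's argument dispenses with the uncountable family of functionals, extracting the contradiction from one explicitly constrained character at the cost of invoking the extra hypothesis.
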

\begin{proof}
  Let us first prove that $vK$ is countable.
  Pick $\gamma \in vK$, $\gamma < 0$, which is not in $p \cdot vK$.
  Fix $x \in K$ with $vx = \gamma$.
  For every $\delta \in vK$, $\delta < 0$, choose $y_\delta \in K$ with $vy_\delta = \delta$, and set $x_\delta = x y_\delta^p$.
  Since the elements $x_\delta$ have distinct negative valuations not divisible by $p$,
  no non-trivial $\F_p$-linear combination of the $x_\delta$ can lie in $\wp(K)$.
  Therefore the elements $x_\delta + \wp(K)$ are $\F_p$-linearly independent in $K/\wp(K)$.
  Since the latter is countable by our standing assumption, this shows that $vK$ is countable.

  Suppose now that $vK$ has no minimal positive element, so that $vK$ is densely ordered.
  Since $vK$ has no non-trivial $p$-divisible convex subgroup,
  every open interval in $vK$ containing $0$ contains elements which are not $p$-divisible.
  We can therefore choose a sequence $\delta_1, \delta_2, \dotsc$ of negative elements of $vK$ which converges to $0$ and such that no $\delta_i$ is divisible by $p$.

  For every $n$, let $x_n \in K$ with $vx_n = \delta_n$.
  As above, the elements $x_n + \mathcal{O}_v + \wp(K) \in K/(\mathcal{O}_v + \wp(K))$ are $\F_p$-linearly independent.
  Consequently there exists an $\F_p$-linear map $f \colon K/\wp(K) \to \Br(K)[p]$
  with $f(\mathcal{O}_v) = 0$ and $f(x_n) \neq 0$ for each $n$.
  By the pairing axiom (Pair), there exists $z \in K^\times$ with $\pair{\cdot}{z} = f$,
  so that $\pair{\mathcal{O}_v}{z} = 0$ and $\pair{x_n}{z} \neq 0$ for each $n$.

  By assumption $Kv$ is not $p$-closed,
  so there exists an element $a \in \mathcal{O}_v$ whose residue does not lie in $\wp(Kv)$.
  This means that $a \not\in \wp(K)$,
  and so adjoining a root of $X^p-X-a$ to $K$ yields a $\Z/p$-extension $L/K$ by Artin--Schreier theory.
  The valuation $v$ extends uniquely to $L$ by $p$-henselianity,
  and the residue field of $L$ is a proper extension of the residue field $Kv$,
  since it contains an Artin--Schreier root of the residue of $a$, but by assumption $Kv$ does not.
  Therefore the value group does not change in the degree $p$ extension $L/K$ by \cite[Theorem~3.3.3]{EnglerPrestel}.
  Since $\pair{\mathcal{O}_v}{z} = 0$,
  Lemma \ref{lem:kernel-pairing-norm}
  (applied to the homomorphism $\chi \colon \G_K \to \Z/p$ corresponding to $a$ under
  the Artin--Schreier isomorphism $\Hom(\G_K,\Z/p) \cong K/\wp(K)$, whose kernel is $\G_L$)
  implies that $z$ is a norm of the extension $L/K$.
  It now follows from \cite[Remark 3.2.17]{EnglerPrestel} that
  the valuation of $z$ must be divisible by $p$.
  Hence we may as well suppose that $z \in \mathcal{O}_v^\times$ by multiplying $z$ by a $p$-th power.

  Since $Kv$ is perfect, we may further suppose that $z \in 1 + \mathfrak{m}_v$.
  However, Lemma \ref{lem:pairing-props} now implies that
  for all $n$ with $v(x_n) > -v(z-1)$ we have $\pair{x_n}{z} = 0$;
  since the values $v(x_n)$ converge to $0$, this is the case for all sufficiently large $n$, contradicting the choice of $z$.
\end{proof}

As usual, an element of a valued field whose valuation is minimal positive is called a \emph{uniformiser}.
\begin{lemma}\label{lem:finite-residue}
  Let $v$ be a non-trivial $p$-henselian valuation on $K$
  with a uniformiser $\pi$ and such that $Kv$ is not $p$-closed.
  Then the residue field $Kv$ is finite.
\end{lemma}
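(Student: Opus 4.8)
The plan is to force the finiteness of $Kv$ by a cardinality contradiction: I will build an $\F_p$-linear map from a group of units onto the full $\F_p$-dual $\Hom_{\F_p}(Kv,\Z/p)$ using axiom (Pair), while bounding its image by $\lvert Kv\rvert$ using Lemma \ref{lem:pairing-props}; since the dual of an infinite-dimensional $\F_p$-space is strictly larger than the space, $Kv$ must be finite. Two preliminaries are used throughout. Since $v(\pi)$ is the minimal positive element of $vK$, every element of positive value is divisible by $\pi$, so $\mathfrak m_v = \pi\mathcal O_v$; hence $1 + \pi w \mapsto \bar w$ identifies $(1+\mathfrak m_v)/(1+\pi\mathfrak m_v)$ with the additive group of $Kv$, and $\mathcal O_v^\times/(1+\pi\mathfrak m_v)$ has cardinality $\lvert Kv\rvert$ when $Kv$ is infinite. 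Moreover, since $Kv$ is not $p$-closed I fix $a_0 \in \mathcal O_v$ with residue outside $\wp(Kv)$; the associated $\Z/p$-extension $L_0/K$ is unramified, so by \cite[Remark 3.2.17]{EnglerPrestel} every norm from $L_0$ has value in $p\cdot vK$.

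I then define, on the set $S$ of units $z \in \mathcal O_v^\times$ with $\pair{\mathcal O_v}{z} = 0$, the homomorphism $\Theta\colon S \to \Hom_{\F_p}(Kv, \Br(K)[p])$ by $\Theta(z)(\bar c) = \pair{\pi^{-1}c}{z}$ for $c \in \mathcal O_v$. This descends to a functional on $Kv$ because $c \in \mathfrak m_v$ gives $\pi^{-1}c \in \mathcal O_v$, which pairs trivially with $z \in S$. By the generalised form of Lemma \ref{lem:pairing-props} with $x = \pi$ we have $\pair{\pi^{-1}\mathcal O_v}{1+\pi\mathfrak m_v} = 0$, so $\Theta$ kills $1+\pi\mathfrak m_v$ and factors through $\mathcal O_v^\times/(1+\pi\mathfrak m_v)$. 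Using (Brau) to identify $\Br(K)[p] \cong \Z/p$, this shows $\lvert\operatorname{Im}\Theta\rvert \le \lvert Kv\rvert$.

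For surjectivity, given any $\psi \in \Hom_{\F_p}(Kv,\Z/p)$, I set $\phi(a) = \psi(\overline{\pi a})$ for $a \in \pi^{-1}\mathcal O_v$ (well defined modulo $\wp(K)$ by a short valuation computation, and vanishing on $\mathcal O_v$), extended by $0$ to a functional on $K/\wp(K)$. Axiom (Pair) then produces $z \in K^\times$ with $\pair{\cdot}{z} = \phi$. By construction $\pair{\mathcal O_v}{z} = 0$ and $\pair{a_0}{z} = 0$, so $z$ is a norm from $L_0$ and $v(z) \in p\cdot vK$; after multiplying by a suitable $p$-th power I may take $z \in S$, and then $\Theta(z)(\bar c) = \phi(\pi^{-1}c) = \psi(\bar c)$, i.e.\ $\Theta(z) = \psi$. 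Hence $\Theta$ is onto $\Hom_{\F_p}(Kv,\Z/p)$.

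Combining the two halves, $\Hom_{\F_p}(Kv,\Z/p)$ is the image of a set of cardinality at most $\lvert Kv\rvert$; were $Kv$ infinite this dual would have cardinality $2^{\lvert Kv\rvert} > \lvert Kv\rvert$, a contradiction, so $Kv$ is finite. I expect the main obstacle to be the surjectivity step — both that $\phi$ is well defined on $K/\wp(K)$ and, above all, that the element $z$ supplied by (Pair) can be normalised into $S$ (a unit) without changing $\Theta(z)$; this is precisely where the unramified character $a_0$ and the valuation constraint on its norms are indispensable. The cardinality bookkeeping and the image bound are routine once $\mathfrak m_v = \pi\mathcal O_v$ and Lemma \ref{lem:pairing-props} are in hand.
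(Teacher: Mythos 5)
Your proof is correct, and it shares the paper's overall strategy --- use (Pair) together with Lemma \ref{lem:pairing-props} to produce a surjection onto the full $\F_p$-dual of a space of dimension $\dim_{\F_p} Kv$, then derive a cardinality contradiction when $Kv$ is infinite --- but it handles the crucial bounding step genuinely differently. The paper pairs against all of $K^\times$, obtaining a surjection $K^\times/(K^{\times p}(1+\pi^2\mathcal{O}_v)) \to \Hom(\tfrac{1}{\pi}\mathcal{O}_v/\wp(\mathcal{O}_v), \Br(K)[p])$, and must therefore control the value-group factor $vK/p$ of the source; it does so by quoting the countability of $vK$ from Lemmas \ref{lem:imperfect-implies-uniformiser} and \ref{lem:perfect-implies-uniformiser}, which in turn rest on the standing assumption that $K/\wp(K)$ is countable. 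You instead restrict to units $S \subseteq \mathcal{O}_v^\times$ from the outset, so the value group never enters and no countability input is needed; the price is that surjectivity of $\Theta$ is no longer immediate from (Pair): you must move the element $z$ supplied by (Pair) into $S$, which you do correctly via the unramified Artin--Schreier extension $L_0$, Lemma \ref{lem:kernel-pairing-norm} and \cite[Remark 3.2.17]{EnglerPrestel} --- precisely the trick the paper deploys inside the proof of Lemma \ref{lem:perfect-implies-uniformiser}, transplanted here. The net effect is that your proof of this lemma is independent of the two preceding lemmas (and of the countability of $K/\wp(K)$), hence self-contained and marginally more general, while the paper's version buys a shorter surjectivity step at the cost of that dependence. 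Two small points of rigour you should write out: ``extended by $0$'' means extending the functional via a choice of $\F_p$-linear complement of $(\pi^{-1}\mathcal{O}_v+\wp(K))/\wp(K)$ inside $K/\wp(K)$ (any extension suffices); and the ``short valuation computation'' is that $\wp(y)\in\pi^{-1}\mathcal{O}_v$ forces $v(y)\geq 0$, since $v(y)<0$ would give $v(\wp(y))=pv(y)\leq -pv(\pi)<-v(\pi)$, whence $\pi^{-1}\mathcal{O}_v\cap\wp(K)=\wp(\mathcal{O}_v)$ and $\phi$ is indeed well defined and vanishes on $\mathcal{O}_v$.
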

\begin{proof}
  By Lemma \ref{lem:imperfect-implies-uniformiser} and Lemma \ref{lem:perfect-implies-uniformiser}
  the value group $vK$ is countable.
  The embedding $\frac{1}{\pi}\mathcal{O}_v \hookrightarrow K$ induces an injection $\frac{1}{\pi}\mathcal{O}_v/\wp(\mathcal{O}_v) \hookrightarrow K/\wp(K)$
  of $\F_p$-vector spaces,
  which in turn induces a surjective linear map $\Hom(K/\wp(K), \Br(K)[p]) \to \Hom(\frac{1}{\pi}\mathcal{O}_v/\wp(\mathcal{O}_v), \Br(K)[p])$.
  We therefore obtain a surjective group homomorphism
  \[ K^\times/p \to \Hom(K/\wp(K), \Br(K)[p]) \to \Hom(\frac{1}{\pi}\mathcal{O}_v/\wp(\mathcal{O}_v), \Br(K)[p]) , \]
  where the first map is induced by the pairing function and surjective by the pairing axiom (Pair).
  The group $1 + \pi^2\mathcal{O}_v/(K^{\times p} \cap (1 + \pi^2\mathcal{O}_v))$ lies in the kernel by Lemma \ref{lem:pairing-props},
  so we obtain a surjective homomorphism
  \[ K^\times/(K^{\times p} (1 + \pi^2\mathcal{O}_v)) \to \Hom(\frac{1}{\pi}\mathcal{O}_v/\wp(\mathcal{O}_v), \Br(K)[p]). \]

  Suppose $Kv$ has infinite dimension $\kappa$ as an $\F_p$-vector space.
  Then $\frac{1}{\pi}\mathcal{O}_v/\wp(\mathcal{O}_v)$ also has dimension at least $\kappa$, since it has $\frac{1}{\pi}\mathcal{O}_v/\mathcal{O}_v \cong Kv$ as a quotient.
  Consequently $\Hom(\frac{1}{\pi}\mathcal{O}_v/\wp(\mathcal{O}_v), \Br(K)[p])$ has cardinality at least $2^\kappa$,
  as we can construct homomorphisms by freely choosing images of some fixed basis vectors.

  On the other hand, $K^\times/(K^{\times p} (1 + \pi^2 \mathcal{O}_v))$ has cardinality at most $\kappa$:
  Indeed, in the subgroup series
  \[ K^\times \ge K^{\times p} \mathcal{O}_v^\times \geq K^{\times p} (1 + \pi \mathcal{O}_v) \geq K^{\times p} (1 + \pi^2 \mathcal{O}_v) \]
  the first factor group $K^\times/(K^{\times p} \mathcal{O}_v^\times) \cong vK/p$ is countable,
  the second factor group \[ (K^{\times p} \mathcal{O}_v^\times) / (K^{\times p} (1 + \pi \mathcal{O}_v)) \cong \mathcal{O}_v^\times / (\mathcal{O}_v^{\times p} (1 + \pi \mathcal{O}_v)) \cong (Kv)^\times/p \] has cardinality at most $\kappa$,
  and the third factor group $(K^{\times p} (1 + \pi \mathcal{O}_v)) / (K^{\times p} (1 + \pi^2 \mathcal{O}_v))$ has cardinality at most $\kappa$
  since it has a surjection from $(1 + \pi \mathcal{O}_v)/(1 + \pi^2 \mathcal{O}_v) \cong Kv$.
  This shows that $K^\times/(K^{\times p} (1 + \pi^2 \mathcal{O}_v))$ has cardinality at most $\kappa$.
  Since we cannot have a surjective map from a set of cardinality at most $\kappa$ to one of cardinality at least $2^\kappa$,
  this is a contradiction.
\end{proof}

\begin{lemma}\label{lem:discrete}
  If $K$ carries a non-trivial $p$-henselian valuation $v$ with $Kv$ not $p$-closed,
  then $Kv$ is finite and $vK$ is isomorphic to $\Z$.
\end{lemma}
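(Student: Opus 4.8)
The plan is to deduce both conclusions ($Kv$ finite and $vK \cong \Z$) from a single auxiliary fact: \emph{no non-trivial $p$-henselian valuation on $K$ whose residue field is not $p$-closed can have $p$-divisible value group}. Granting this, I would reduce to the hypotheses of Lemmas \ref{lem:imperfect-implies-uniformiser} and \ref{lem:perfect-implies-uniformiser} by passing to coarsenings. Let $\Delta \leq vK$ be the maximal $p$-divisible convex subgroup (the union of all $p$-divisible convex subgroups is again convex and $p$-divisible, so $\Delta$ exists), and let $w$ be the coarsening of $v$ with value group $vK/\Delta$. Coarsenings of $p$-henselian valuations are $p$-henselian (as $\mathfrak{m}_w \subseteq \mathfrak{m}_v \subseteq \wp(K)$), $Kw$ is again not $p$-closed (a $\Z/p$-extension of $Kv$ lifts through the $p$-henselian valuation induced on $Kw$), and $vK/\Delta$ has no non-trivial $p$-divisible convex subgroup by maximality of $\Delta$.

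The auxiliary fact is where I expect the main difficulty to lie, and I would prove it using the Brauer-group input. Suppose $w$ is non-trivial $p$-henselian with $Kw$ not $p$-closed and $wK$ $p$-divisible. If $Kw$ were imperfect, Lemma \ref{lem:imperfect-implies-uniformiser} would produce a minimal positive element of $wK$, which a non-trivial $p$-divisible group cannot possess; hence $Kw$ is perfect. Choose $a \in \mathcal{O}_w$ whose residue lies outside $\wp(Kw)$, with associated non-zero character $\chi$. For an arbitrary $z \in K^\times$ I would simplify $z$ modulo $p$-th powers: since $wK = p \cdot wK$ we may write $z = y^p u$ with $u \in \mathcal{O}_w^\times$, and since $Kw$ is perfect the residue of $u$ is a $p$-th power, so after a further adjustment $z \equiv u' \pmod{(K^\times)^p}$ with $u' \in 1 + \mathfrak{m}_w$. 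As the second pairing argument factors through $K^\times/p$ and $a \in \mathcal{O}_w$, Lemma \ref{lem:pairing-props} gives $\pair{a}{z} = \pair{a}{u'} = 0$. Thus $\chi$ pairs trivially with all of $K^\times$, so the non-degeneracy provided by axiom (Pair) forces $\chi = 0$, contradicting the choice of $a$. The point is that the two reductions — one using $p$-divisibility of $wK$, one using perfectness of $Kw$ — are exactly what is needed to funnel every $z$ into the regime covered by Lemma \ref{lem:pairing-props}.

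With the auxiliary fact in hand, $vK$ itself is not $p$-divisible, so the coarsening $w$ above is non-trivial. Then Lemma \ref{lem:imperfect-implies-uniformiser} (if $Kw$ is imperfect) or Lemma \ref{lem:perfect-implies-uniformiser} (if $Kw$ is perfect, using that $vK/\Delta$ has no $p$-divisible convex subgroup) produces a uniformiser for $wK$, and Lemma \ref{lem:finite-residue} shows $Kw$ is finite. But the valuation induced by $v$ on the finite field $Kw$ has value group $\Delta$ and must be trivial, so $\Delta = 0$ and $w = v$. Hence $Kv$ is finite and $vK$ has a minimal positive element $\gamma_0$, giving the first assertion.

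Finally I would upgrade ``minimal positive element'' to $vK \cong \Z$ by proving $vK$ archimedean. Here $C = \Z\gamma_0$ is the smallest non-zero convex subgroup; if $vK \neq C$, coarsen $v$ by $C$ to obtain a non-trivial $p$-henselian valuation $w'$ with $w'K = vK/C$ and $Kw'$ not $p$-closed. Rerunning the argument of the previous paragraph for $w'$ (legitimate, since the auxiliary fact applies to all coarsenings of $v$) shows $Kw'$ is finite; but $Kw'$ carries the valuation induced by $v$ with value group $C \cong \Z$, so it is infinite — a contradiction. Therefore $vK$ is archimedean, and an archimedean ordered abelian group with a minimal positive element is isomorphic to $\Z$.
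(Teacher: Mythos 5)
Your proposal is correct and takes essentially the same route as the paper's proof: the same pairing argument (Lemma \ref{lem:pairing-props} plus axioms (Brau) and (Pair)) excludes $p$-divisible value groups, the same coarsening by the maximal $p$-divisible convex subgroup feeds into Lemmas \ref{lem:imperfect-implies-uniformiser}, \ref{lem:perfect-implies-uniformiser} and \ref{lem:finite-residue}, and a second coarsening by $\Z\gamma_0$ forces $vK \cong \Z$. The only difference is organizational: you isolate non-$p$-divisibility as a standalone fact valid for every such valuation (handling the imperfect case via the minimal-positive-element contradiction), whereas the paper proves it for $v$ directly and then repeats the argument for the coarsenings $w$ and $v'$.
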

\begin{proof}
  If the residue field $Kv$ is not perfect,
  then by Lemma \ref{lem:imperfect-implies-uniformiser} there exists a uniformiser
  and by Lemma \ref{lem:finite-residue} the residue field $Kv$ is finite,
  and therefore perfect after all.

  Thus the residue field $Kv$ is perfect.
  If the value group $vK$ is $p$-divisible, every element of $K^\times$ can be written as
  the product of a $p$-th power with an element of $1 + \mathfrak{m}_v$.
  Lemma \ref{lem:pairing-props} implies that then $\pair{\mathcal{O}_v}{K^\times} = 0$.
  By the pairing axiom (Pair), this is only possible if $\mathcal{O}_v \subseteq \wp(K)$,
  implying that $Kv = \wp(Kv)$,
  but this is in contradiction to the assumption that $Kv$ is not $p$-closed.
  Therefore the value group $vK$ is not $p$-divisible.
  Let $w$ be the coarsening of $v$ corresponding to the largest $p$-divisible convex subgroup of $vK$.

  Since $Kv$ has a $\Z/p$-extension by assumption,
  there exists a $\Z/p$-extension of $K$ which is unramified with respect to $v$,
  in particular unramified with respect to $w$,
  so $Kw$ is not $p$-closed.
  Repeating the previous argument with $(K,w)$,
  we see that the residue field $Kw$ is perfect.
  By Lemma \ref{lem:perfect-implies-uniformiser}
  $(K,w)$ has a uniformiser $\pi$,
  and by Lemma \ref{lem:finite-residue} its residue field $Kw$ is finite.
  In particular, $v$ cannot be a proper refinement of $w$, so $w = v$.

  The subgroup of $vK$ generated by the value of the uniformiser $\pi$
  is isomorphic to $\Z$ and convex.
  Consider the coarsening $v'$ of $v$ with value group $v'K = vK/\langle v(\pi)\rangle$
  (i.e.\ the finest proper coarsening of $v$).
  The residue field $Kv'$ carries a valuation with value group $\Z$,
  and so $Kv'$ is imperfect.
  As above, $Kv'$ is also not $p$-closed.
  If $v'$ is not the trivial valuation,
  then Lemma \ref{lem:imperfect-implies-uniformiser} and Lemma \ref{lem:finite-residue}
  imply that $Kv'$ is finite, which is absurd.
  Therefore $v'$ must be trivial, and $vK$ is isomorphic to $\Z$.
\end{proof}

\begin{lemma}\label{lem:p-th-power-classes-isom}
  In the situation of the previous lemma,
  the natural group homomorphism $K^\times/p \to \widehat{K}^\times/p$ is an isomorphism,
  where $\widehat{K}$ is the completion of $K$ with respect to $v$.
\end{lemma}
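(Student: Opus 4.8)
The plan is to deduce the statement from the pairing axiom (Pair), applied to both $K$ and its completion, after first comparing the additive Artin--Schreier groups. Write $\iota \colon K \hookrightarrow \widehat K$ for the inclusion. Note that by Lemma~\ref{lem:discrete} the completion $\widehat K$ is a complete discretely valued field of characteristic $p$ with finite residue field $\F_q = Kv$ and value group $\Z$, hence $\widehat K \cong \F_q\pow{t}$ is a non-archimedean local field; in particular $\widehat K$ satisfies the axioms (Brau) and (Pair) by Proposition~\ref{prop:local-satisfies-axioms}. Thus both fields carry perfect pairings $\pair{\cdot}{\cdot}$ identifying $K^\times/p$ with $\Hom(K/\wp(K), \Z/p)$ and $\widehat K^\times/p$ with $\Hom(\widehat K/\wp(\widehat K), \Z/p)$, and the task is to see that these identifications are compatible with $\iota$.

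First I would show that $\iota$ induces an isomorphism $K/\wp(K) \to \widehat K/\wp(\widehat K)$ of Artin--Schreier groups, and this is where $p$-henselianity enters crucially. For surjectivity, given $\hat z \in \widehat K$, density of $K$ in $\widehat K$ provides $z \in K$ with $\hat z - z \in \mathfrak{m}_{\widehat K}$; since $\widehat K$ is complete, hence henselian, we have $\mathfrak{m}_{\widehat K} \subseteq \wp(\widehat K)$, so $\hat z$ and $z$ agree modulo $\wp(\widehat K)$. For injectivity, if $z \in K \cap \wp(\widehat K)$, write $z = \wp(\hat w)$ and approximate $\hat w$ by $w \in K$ with $\epsilon := \hat w - w \in \mathfrak{m}_{\widehat K}$; then $z - \wp(w) = \wp(\epsilon)$ has $v(\wp(\epsilon)) = v(\epsilon) \geq 1$ (as $v(\epsilon^p) > v(\epsilon)$), so it lies in $\mathfrak{m}_{\widehat K} \cap K = \mathfrak{m}_v \subseteq \wp(K)$ because $v$ is $p$-henselian, whence $z \in \wp(K)$.

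Next I would record the compatibility of the two pairings under $\iota$, namely $\pair{\bar z}{a} = \pair{\iota\bar z}{\iota a}$ for $\bar z \in K/\wp(K)$ and $a \in K^\times/p$ (the first computed over $K$, the second over $\widehat K$). This rests on the functoriality of the symbol $\pair{\cdot}{\cdot}$ — defined by a connecting map and a cup product, both compatible with restriction $\Br(K) \to \Br(\widehat K)$ — together with the functoriality of Artin--Schreier theory. The one genuine point is that the restriction $\Br(K)[p] \to \Br(\widehat K)[p]$ is an isomorphism of groups each isomorphic to $\Z/p$, rather than the zero map; to see it is nonzero I would produce an unramified $\Z/p$-extension $K_{\mathrm{ur}}/K$, which exists because the $p$-henselian valuation lifts the extension $\F_{q^p}/\F_q$ of the residue field, with Artin--Schreier class $z_{\mathrm{ur}} \in K/\wp(K)$, and observe via Lemma~\ref{lem:kernel-pairing-norm} that $\pair{z_{\mathrm{ur}}}{\pi}$ (for a uniformiser $\pi$) restricts to the corresponding symbol over the local field $\widehat K$, which generates $\Br(\widehat K)[p]$ by the standard local computation. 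Fixing identifications $\Br(K)[p] \cong \Z/p \cong \Br(\widehat K)[p]$ along this isomorphism then yields the displayed compatibility.

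Finally, writing $\Phi_K$ and $\Phi_{\widehat K}$ for the pairing isomorphisms and $\iota^*$ for the restriction $\Hom(\widehat K/\wp(\widehat K), \Z/p) \to \Hom(K/\wp(K), \Z/p)$, the compatibility is precisely the identity $\iota^* \circ \Phi_{\widehat K} \circ \iota_* = \Phi_K$, where $\iota_* \colon K^\times/p \to \widehat K^\times/p$ is the map in question. Since $\Phi_K$ and $\Phi_{\widehat K}$ are isomorphisms by (Pair), and $\iota^*$ is an isomorphism because $K/\wp(K) \to \widehat K/\wp(\widehat K)$ is one by the first step, it follows formally that $\iota_*$ is an isomorphism. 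I expect the main obstacle to be the third step — establishing the compatibility of the pairings, and in particular that restriction induces an isomorphism $\Br(K)[p] \cong \Br(\widehat K)[p]$ — since the comparison of the Artin--Schreier groups, although it uses $p$-henselianity in an essential way, is otherwise elementary.
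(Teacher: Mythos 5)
Your proposal is correct and takes essentially the same route as the paper's own proof: both compare the pairings on $K$ and $\widehat{K}$ through a commutative diagram, show $K/\wp(K) \to \widehat{K}/\wp(\widehat{K})$ is an isomorphism using density together with $p$-henselianity (the paper phrases this as $\wp(\widehat{K}) = \wp(K) + \widehat{\mathfrak{m}}_v$ and openness of $\wp(\widehat{K})$), show the restriction $\Br(K)[p] \to \Br(\widehat{K})[p]$ is nonzero and hence an isomorphism of groups isomorphic to $\Z/p$, and conclude formally from axiom (Pair) applied to both $K$ and $\widehat{K}$ via Proposition \ref{prop:local-satisfies-axioms}. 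The only cosmetic difference is that the paper certifies the nonvanishing of the image of $\pair{\mathcal{O}_v}{\pi}$ in $\Br(\widehat{K})[p]$ by a citation to standard facts on Brauer groups of local fields, whereas you rederive it from Lemma \ref{lem:kernel-pairing-norm} and the unramified norm computation.
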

\begin{proof}
  Let $\pi \in K$ be a uniformiser.
  The completion $\widehat{K}$ is isomorphic to $\F_q\pow{t}$ for suitable $\F_q = Kv$,
  where the isomorphism sends $\pi$ to $t$.
  We obtain the following diagram:
  \[ \xymatrix{
      K^\times/p  \times K/\wp(K) \ar@<-5ex>[d] \ar@<4ex>[d] \ar[r] & \Br(K)[p] \ar[d] \\
      \widehat{K}^\times/p \times \widehat{K}/\wp(\widehat{K}) \ar[r] & \Br(\widehat{K})[p]
    }\]
  By the construction of the pairing, this diagram is commutative.
  The map $K/\wp(K) \to \widehat{K}/\wp(\widehat{K})$ is surjective since $K$ is dense in $\widehat{K}$ and $\wp(\widehat{K})$ is open,
  as $\wp(\widehat{K}) \supseteq \widehat{\mathfrak{m}}_v$ by Hensel's Lemma.
  It is also injective since $\wp(\widehat{K}) = \wp(K) + \widehat{\mathfrak{m}}_v$,
  and $\widehat{\mathfrak{m}}_v \cap K = \mathfrak{m}_v \subseteq \wp(K)$ by the $p$-henselianity assumption.
  The map $\Br(K)[p] \to \Br(\widehat{K})[p]$ is an isomorphism
  since both sides are isomorphic to $\Z/p$
  and the image of $\pair{\mathcal{O}_v}{\pi}$ in $\Br(\widehat{K})[p]$ is non-zero
  by standard facts on the Brauer group of local fields, cf.\ \cite[Remark 6.3.6]{GilleSzamuely_2nd}.

  Using the pairing axiom (Pair) for both $K$ and $\widehat{K}$
  (where it holds by Proposition \ref{prop:local-satisfies-axioms}),
  we obtain that $K^\times/p \to \widehat{K}^\times/p$ is an isomorphism.
\end{proof}

\begin{proposition}\label{prop:field-is-local}
  Suppose, beyond the standing assumptions of this section, that $[K : K^p] \leq p$.
  Let $v$ be a non-trivial $p$-henselian valuation on $K$ with $Kv$ not $p$-closed.
  Then $(K, v)$ is a complete discretely valued field with finite residue field, i.e.\ a local field.
\end{proposition}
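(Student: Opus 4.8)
The plan is to reduce the statement to proving that $(K,v)$ is \emph{complete}, since the remaining assertions are already in hand: Lemma~\ref{lem:discrete} gives $vK \cong \Z$ and $Kv$ finite, so $(K,v)$ is discretely valued with finite residue field, and Lemma~\ref{lem:p-th-power-classes-isom} identifies the completion as $\widehat{K} \cong \F_q\pow{t}$ (with $\F_q = Kv$ and the uniformiser $\pi \mapsto t$) and shows that the inclusion $K \hookrightarrow \widehat{K}$ induces an isomorphism $K^\times/p \cong \widehat{K}^\times/p$. Via Artin--Schreier this is the same as the isomorphism $K/\wp(K) \cong \widehat{K}/\wp(\widehat{K})$ used in the proof of that lemma, i.e.\ every $\Z/p$-extension of $\widehat{K}$ is induced from a unique $\Z/p$-extension of $K$. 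It thus remains to show $K = \widehat{K}$.

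First I would bring in the new hypothesis $[K:K^p]\le p$. Since $v(\pi) = 1 \notin p\cdot vK$, the uniformiser is not a $p$-th power, so $[K:K^p] = p$ and $\{\pi\}$ is a $p$-basis of $K$; the same holds for $\widehat{K}$. As $\pi = t$ is not a $p$-th power in $\widehat{K} = \F_q\pow{t}$, the polynomial $X^p - \pi$ is irreducible over $\widehat{K}$, so $K^{1/p}\otimes_K \widehat{K}$ is a field; hence $K^{1/p}$ and $\widehat{K}$ are linearly disjoint over $K$ and $\widehat{K}/K$ is \emph{separable}. Feeding this into the exact sequence of Kähler differentials \[ 0 \to \Omega_{K/\F_p}\otimes_K \widehat{K} \to \Omega_{\widehat{K}/\F_p} \to \Omega_{\widehat{K}/K} \to 0, \] which is left-exact precisely because $\widehat{K}/K$ is separable, the two outer-left spaces have $\widehat{K}$-dimension $\log_p[K:K^p] = 1 = \log_p[\widehat{K}:\widehat{K}^p]$, so the injection between them is an isomorphism and $\Omega_{\widehat{K}/K} = 0$. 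Therefore $\widehat{K}/K$ is separable \emph{algebraic}, and consequently $\widehat{K} = K^{\mathrm{sep}}\cap \widehat{K} = K^h$ is the Henselisation of $K$.

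It then suffices to prove that $K$ is already Henselian, equivalently that $v$ extends uniquely to each finite subextension $L = K(\alpha)$ of $\widehat{K}/K$; this is where $p$-henselianity and the $\Z/p$-extension isomorphism are combined. Let $w_1$ be the restriction to $L$ of the valuation of $\widehat{K}$, so that $\widehat{L_{w_1}} = \widehat{K}$ by density. If $v$ had a second extension $w_2$ to $L$, then weak approximation would make $L$ dense in $\widehat{L_{w_1}}\times\widehat{L_{w_2}}$, yielding a surjection $L/\wp(L) \twoheadrightarrow \widehat{K}/\wp(\widehat{K})\times \widehat{L_{w_2}}/\wp(\widehat{L_{w_2}})$ in which the second factor is nonzero, every local field of characteristic $p$ admitting $\Z/p$-extensions. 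On the other hand the map $L/\wp(L)\to \widehat{K}/\wp(\widehat{K})$ induced by $w_1$ is an isomorphism: surjective by density (as $\wp(\widehat{K})\supseteq\widehat{\mathfrak m}$ is open), and injective because $p$-henselianity prevents a $\Z/p$-extension of $L$ inside $\widehat{K}$. An isomorphism onto the first factor forces the image in the product to be a graph, which cannot be surjective while the second factor is nonzero -- a contradiction. Hence $v$ extends uniquely to $L$, and since $\widehat{L} = \widehat{K}$ this gives $[L:K] = 1$, i.e.\ $\alpha\in K$; thus $\widehat{K} = K$.

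I expect the genuine obstacle to be this final step, namely upgrading the given \emph{$p$}-henselianity to full completeness. The differential computation disposes of the ``transcendental part'' of $\widehat{K}/K$, and it is exactly here that $[K:K^p]\le p$ is indispensable: without it one could take $K = \F_q(B)$ for a transcendence basis $B$ of $\widehat{K}$, whose completion is algebraic yet which is wildly imperfect and far from complete. What survives the differential argument is the prime-to-$p$, tamely split part of $\widehat{K}/K$, about which $p$-henselianity says nothing directly; eliminating it is precisely what the Brauer-group input buys us, through the isomorphism $K/\wp(K)\cong\widehat{K}/\wp(\widehat{K})$ of Lemma~\ref{lem:p-th-power-classes-isom}, and the weak-approximation/graph mechanism above is how that ``$\Z/p$-extension count'' rules out the tame splitting responsible for fields like $\F_q(t)$ failing to be complete. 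The most delicate point to verify carefully will be the injectivity of the local restriction map, i.e.\ that no $\Z/p$-extension of the intermediate field $L$ lies inside $\widehat{K}$; this amounts to showing that $p$-henselianity descends to $(L, w_1)$, and is the step I would scrutinise hardest.
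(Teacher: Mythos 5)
Your overall reduction (via Lemmas \ref{lem:discrete} and \ref{lem:p-th-power-classes-isom}, leaving only $K = \widehat{K}$ to prove) matches the paper, but the differential argument contains a fatal non sequitur: from ``$\widehat{K}/K$ is separable and $\Omega_{\widehat{K}/K} = 0$'' you conclude ``$\widehat{K}/K$ is separable \emph{algebraic}''. That implication is valid for finitely generated extensions, but false in general, and it fails in exactly the situation at hand. Take $K = \F_p(t)$ inside $\widehat{K} = \F_p\pow{t}$: every fact your second paragraph uses holds for this pair --- $[K:K^p] = p$ with $p$-basis $\{t\}$, the polynomial $X^p - t$ is irreducible over $\widehat{K}$, so $K^{1/p}$ and $\widehat{K}$ are linearly disjoint and $\widehat{K}/K$ is separable by MacLane's criterion, and your dimension count gives $\Omega_{\widehat{K}/K} = 0$ --- yet $\F_p\pow{t}$ has uncountable transcendence degree over $\F_p(t)$. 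The underlying reason is that for field extensions in characteristic $p$ the vanishing of $\Omega_{E/K}$ is equivalent to $E = E^p K$, and in your setting this is automatic and vacuous: $\widehat{K} = \widehat{K}^p(\pi) = \widehat{K}^p K$ simply because $[\widehat{K}:\widehat{K}^p] = p$ and $\pi \in K \setminus \widehat{K}^p$. So the differential computation does \emph{not} dispose of the transcendental part; ruling it out genuinely requires the isomorphism $K^\times/p \cong \widehat{K}^\times/p$ from Lemma \ref{lem:p-th-power-classes-isom}, which your second paragraph never invokes. (Your subsequent henselisation step also rests on the unproven claim, which you flag yourself, that $(L, w_1)$ is $p$-henselian; but the argument already breaks before that point.)

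For comparison, the paper's proof uses the Brauer-theoretic input and the $p$-basis \emph{simultaneously}, in one short computation that needs no separability, henselisation or approximation. Given $x \in \widehat{K}$, apply the isomorphism $K^\times/p \cong \widehat{K}^\times/p$ to the unit $1 + \pi x^p$: there are $y \in K^\times$ and $z \in \widehat{K}^\times$ with $y = z^p(1 + \pi x^p)$. Since $\{\pi\}$ is a $p$-basis of $K$, write $y = y_0^p + \pi y_1^p + \dotsb + \pi^{p-1} y_{p-1}^p$ with $y_i \in K$; since $1, \pi, \dotsc, \pi^{p-1}$ are also linearly independent over $\widehat{K}^p$, comparing this with $y = z^p + \pi (zx)^p$ coefficientwise gives $z = y_0 \in K$ and $zx = y_1 \in K$, whence $x = y_1/y_0 \in K$. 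This is the step where the hypothesis $[K:K^p] \leq p$ does its real work, and it is where your proof needs to be rebuilt.
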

\begin{proof}
  Applying Lemma \ref{lem:discrete},
  it only remains to show that $K = \widehat{K}$, i.e.\ $(K,v)$ is complete.
  Let $\pi$ be a uniformiser of $(K,v)$.
  For $x \in \widehat{K}$, consider the element $1 + \pi x^p$.
  Since $K^\times/p \to \widehat{K}^\times/p$ is an isomorphism by Lemma \ref{lem:p-th-power-classes-isom},
  there exist $y \in K$ and $z \in \widehat{K}^\times$ with $y = z^p(1 + \pi x^p)$.
  By the assumption $[K : K^p] \leq p$, the element $\pi$ of $K$ is a $p$-basis, and so we may write $y = y_0^p + \pi y_1^p + \dotsb + \pi^{p-1} y_{p-1}^p$ with elements $y_i \in K$.
  We have thus obtained the equation
  \[ y_0^p + \pi y_1^p + \dotsb + \pi^{p-1} y_{p-1}^p = z^p + \pi z^p x^p .\]
  Since $\pi$ is also a $p$-basis of the field $\widehat{K}$, we can compare coefficients to obtain $z = y_0 \in K$ and $y_1 = zx$, so in particular $x = y_1/y_0 \in K$, as desired.  
\end{proof}

\begin{remark}\label{rem:condition-imperfection-needed}
  The additional assumption $[K : K^p] \leq p$ cannot be dispensed with, as the following construction due to Arno Fehm shows.

  Consider the local field $\F_q\pow{t}$ and enumerate its elements as $(x_\alpha)_{\alpha < 2^{\aleph_0}}$.
  Let $s \in \F_q\pow{t}$ be an element such that $s$ and $t$ are algebraically independent.
  We construct an increasing chain of subfields $K_\beta$ of $\F_q\pow{t}$ indexed by $\beta < 2^{\aleph_0}$,
  where $K_0 = \F_q(t)$, every $K_\beta$ has cardinality strictly less than $2^{\aleph_0}$,
  $K_\beta$ is relatively algebraically closed in $\F_q\pow{t}$,
  the image of the map $K_\beta^\times/p \to \F_q\pow{t}^\times/p$ contains the $p$-th power classes of all $x_\alpha$ with $\alpha < \beta$,
  and $s$ is transcendental over $K_\beta$.
  This can be done straightforwardly by transfinite induction:
  At limit ordinals $\beta$ one takes $K_\beta := \bigcup_{\gamma < \beta} K_\gamma$,
  and for a successor ordinal $\beta+1$ one takes $K_{\beta+1}$ to be the relative algebraic closure of $K_\beta(x)$ for a suitably chosen element $x$ in the $p$-th power class $x_\beta \F_q\pow{t}^{\times p}$ of $x_\beta$.
  The only constraint is ensuring that $s$ remains transcendental over $K_{\beta+1}$;
  to do so, it suffices to pick $x$ not algebraic over $K_\beta(s)$,
  which can always be done since $x_\beta \F_q\pow{t}^{\times p}$ has strictly larger cardinality than $K_\beta(s)$.

  Now the field $K = \bigcup_{\beta < 2^{\aleph_0}} K_\beta$ is relatively algebraically closed in $\F_q\pow{t}$, the map $K^\times/p \to \F_q\pow{t}^\times/p$ is an isomorphism (surjectivity is by construction, and injectivity follows from relative algebraic closedness), but $K$ does not contain $s$.
  The restriction map of absolute Galois groups $\G_{\F_q\pow{t}} \to \G_K$ is an isomorphism:
  here surjectivity follows from relative algebraic closedness,
  and injectivity holds since the restriction map $\G_{\F_q\pow{t}} \to \G_{\F_q(t)}$ is injective by Krasner's Lemma \cite[Corollary 18.5.3]{Efrat_ValuationsOrderingsMilnorKTheory}.
  Furthermore, the restriction map of Brauer groups $\Br(K) \to \Br(\F_q\pow{t})$ is an isomorphism,
  as $\Br(K)$ is known from the general theory of the Brauer group
  \cite[Corollaire 2.3]{Grothendieck_Brauer3}.

  Hence $K$ is a counterexample to Proposition \ref{prop:field-is-local} when the condition $[K : K^p] \leq p$ is dropped, and even to Theorem \ref{thm:intro-basic} without the axiom (Imp).
  Indeed, $K$ is not isomorphic to a local field, since it carries a unique non-trivial henselian valuation (namely the restriction of the $t$-adic valuation on $\F_q\pow{t}$) but is not complete.
\end{remark}

\begin{remark}\label{rem:alternative-to-imperfection-axiom}
  The condition $[K : K^p] \leq p$ can be replaced by some alternatives.
  For instance, let us instead suppose that every purely inseparable extension $L/K$ of degree $p$ also satisfies the axioms (Brau) and (Pair).
  Then by applying Lemma \ref{lem:p-th-power-classes-isom}, we see that for every such $L$ the map $L^\times/p \to (L\widehat{K})^\times/p$ must be an isomorphism, and so in particular $L$ is relatively inseparably closed in $L\widehat{K}$.
  For a uniformiser $\pi$ of $K$ we in particular obtain that $K(\sqrt[p]{\pi})$ is relatively inseparably closed in $\widehat{K}(\sqrt[p]{\pi}) = \widehat{K}^{1/p}$, implying that $K(\sqrt[p]{\pi}) = K^{1/p}$, so $[K : K^p] = [K^{1/p} : K] = p$.
\end{remark}

We can now prove the main theorem from the introduction.
\begin{proof}[Proof of Theorem \ref{thm:intro-basic}]
  Let $K$ be a field satisfying all the hypotheses.
  Then by Corollary \ref{cor:val-exists} and Proposition \ref{prop:field-is-local}, $K$ is a local field of characteristic $p$, i.e.\ $K \cong \F_{q'}\pow{t}$ for some power $q'$ of $p$.
  Since the $t$-adic valuation on $\F_{q'}\pow{t}$ is the unique non-trivial henselian valuation on this field by F.~K.~Schmidt's Theorem \cite[Theorem 4.4.1]{EnglerPrestel}, Proposition \ref{prop:EF-replacement} shows that we must have $q' = q$.
\end{proof}

\section{Sharpenings using quotients of the absolute Galois group}
\label{sec:sharpenings-quotients}

Let us observe the ways in which the axiom $\G_K \cong \G_{\F_q\pow{t}}$ was used in the proof of Theorem \ref{thm:intro-basic}.
We established that $K$ has characteristic $p$ in Lemma \ref{lem:char-p},
using that the $p$-Sylow subgroups of $\G_K$ are projective,
and used Corollary \ref{cor:val-exists} to obtain a valuation on $K$.
In Section \ref{sec:pinning-down} we used throughout that $K$ has countably many $\Z/p$-extensions,
i.e.\ that $\Hom(\G_K, \Z/p)$ is countable.
Lastly, we finished the proof by observing non-isomorphic local fields of characteristic $p$ have non-isomorphic absolute Galois groups.

The last two of these four uses actually need much weaker information than the isomorphism of full absolute Galois groups $\G_K \cong \G_{\F_q\pow{t}}$.
Indeed, the information about $\Z/p$-extensions is contained in $\G_K^{\mathrm{ab}}$,
and likewise non-isomorphic local fields of characteristic $p$ have non-isomorphic abelianised absolute Galois groups by \cite[Proposition 7.5.9]{NSW}.

In this section we want to show that also Lemma \ref{lem:char-p} and Corollary \ref{cor:val-exists} have versions in which the hypothesis only refers to certain quotients of $\G_K$.
The main tool to achieve this is \cite{CheboluEfratMinac_QuotientsDetermineCohomology}.
The proofs here make slightly more serious use of Galois cohomology and Milnor $K$-groups than in the preceding sections,
so we assume a passing familiarity with the definitions and basic properties on the part of the reader.

Following \cite[§4]{CheboluEfratMinac_QuotientsDetermineCohomology}, given a profinite group $G$ and a prime $p$, we write $G^{[3,p]}$ for the quotient $G/((G^p[G,G])^p [G^p[G,G], G])$,
i.e.\ the quotient of $G$ by the third term in its lower $p$-central series (also called descending $p$-central sequence).
Observe that $G^{[3,p]}$ is a pro-$p$ group which is nilpotent of class $2$ as a central extension of $G/(G^p[G,G])$ by $G^p[G,G]/((G^p[G,G])^p [G^p[G,G], G])$.

\begin{lemma}\label{lem:char-p-sharpened}
  Let $p$ be a prime number,
  and let $K$ be a field such that $\Br(K)[p] \neq 0$ and
  the absolute Galois group $\G_K$ satisfies the following:
  For every open normal subgroup $H$ of $\G_K$ with cyclic quotient
  $\G_K/H$ of order dividing $p-1$,
  $H^{[3,p]} \cong F^{[3,p]}$
  for some free pro-$p$ group $F$.
  Then $K$ has characteristic $p$.

  Conversely, every field $K$ of characteristic $p$ satisfies
  the condition on $\G_K$.
\end{lemma}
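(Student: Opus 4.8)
The plan is to mirror the proof of Lemma~\ref{lem:char-p}, replacing the now-unavailable hypothesis $\cd_p \G_K \leq 1$ by an application of \cite{CheboluEfratMinac_QuotientsDetermineCohomology}, which lets one read off the vanishing of an $H^2$ with $\F_p$-coefficients from the isomorphism type of a quotient of the form $(\cdot)^{[3,p]}$. The one structural subtlety is that the Brauer-group statement $\Br(K)[p] = H^2(\G_K, \mu_p)$ involves the Galois module $\mu_p$, which becomes isomorphic to the trivial module $\F_p$ only after adjoining the $p$-th roots of unity; this is exactly why the hypothesis is imposed for \emph{all} open normal $H$ with cyclic quotient of order dividing $p-1$.

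For the forward direction I would argue by contradiction, assuming $\kar K \neq p$. Set $L = K(\mu_p)$. Then $L/K$ is Galois with $\Gal(L/K)$ embedding into $(\Z/p)^\times$, hence cyclic of order dividing $p-1$, so $H := \G_L$ is one of the subgroups to which the hypothesis applies. A restriction--corestriction argument shows the restriction map $\Br(K)[p] \to \Br(L)[p]$ is injective, since its composite with corestriction is multiplication by $[L:K]$, which is prime to $p$ and therefore invertible on $p$-torsion; thus $\Br(L)[p] \neq 0$. As $\kar L \neq p$ and $\mu_p \subseteq L$, I may identify $0 \neq \Br(L)[p] = H^2(\G_L, \mu_p) = H^2(\G_L, \F_p)$, using \cite[Corollary 4.4.5]{GilleSzamuely_2nd} exactly as in Lemma~\ref{lem:char-p}.

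Next I would invoke the hypothesis $\G_L^{[3,p]} \cong F^{[3,p]}$ for some free pro-$p$ group $F$. Both $\G_L$ and $F$ are Bloch--Kato groups in the sense relevant to \cite{CheboluEfratMinac_QuotientsDetermineCohomology}: for $\G_L$ this is the norm residue isomorphism theorem (and here $\mu_p \subseteq L$ ensures that the mod-$p$ cohomology ring with trivial coefficients is the one governed by Milnor $K$-theory), while for $F$ it is immediate since $H^{\geq 2}(F, \F_p) = 0$. The cited results then show that an isomorphism $\G_L^{[3,p]} \cong F^{[3,p]}$ induces an isomorphism $H^2(\G_L, \F_p) \cong H^2(F, \F_p) = 0$, contradicting $H^2(\G_L, \F_p) \neq 0$. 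Hence $\kar K = p$.

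For the converse, let $\kar K = p$ and let $H \trianglelefteq \G_K$ be open normal with cyclic quotient of order dividing $p-1$; write $H = \G_M$ for the corresponding finite separable extension $M/K$, which again has characteristic $p$. By \cite[Corollary 6.1.3]{NSW} the maximal pro-$p$ quotient $F := \G_M(p)$ is free pro-$p$. Since $H^{[3,p]}$ is a pro-$p$ group, the projection $H \to H^{[3,p]}$ factors through $H(p) = F$, and because the defining operations of $(\cdot)^{[3,p]}$ commute with passage to the maximal pro-$p$ quotient one gets $H^{[3,p]} \cong F^{[3,p]}$, as required. The main obstacle is the middle step: isolating from \cite{CheboluEfratMinac_QuotientsDetermineCohomology} the precise statement that $H^2(\cdot, \F_p)$ with trivial coefficients is determined by the isomorphism type of $(\cdot)^{[3,p]}$ and that a free pro-$p$ group is an admissible comparison object; the restriction--corestriction reduction and the compatibility of $(\cdot)^{[3,p]}$ with maximal pro-$p$ quotients are routine by comparison.
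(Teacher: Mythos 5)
Your converse direction coincides with the paper's proof, and your reduction in the forward direction (pass to $L = K(\zeta_p)$, note $\Gal(L|K)$ is cyclic of order dividing $p-1$, use restriction--corestriction to see $0 \neq \Br(K)[p] \hookrightarrow \Br(L)[p] = H^2(\G_L, \mu_p) \cong H^2(\G_L, \Z/p)$) is exactly the paper's. The gap is the step you yourself flag as the ``main obstacle'', and it is a genuine one: \cite[Theorem A]{CheboluEfratMinac_QuotientsDetermineCohomology} is a statement about \emph{two fields} containing a primitive $p$-th root of unity --- it says that if $\G_{F_1}^{[3,p]} \cong \G_{F_2}^{[3,p]}$ for such fields $F_1, F_2$, then their mod-$p$ cohomology rings are isomorphic. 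It does not, as stated, accept an abstract free pro-$p$ group as one of the two comparison objects, so ``$F$ is a Bloch--Kato group'' is not a hypothesis under which you can invoke that theorem; the cohomological recipe underlying it is established there only for absolute Galois groups of fields with $\mu_p$, via Merkurjev--Suslin/Rost--Voevodsky.

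The paper closes exactly this gap with one additional idea that your proposal is missing: since a free pro-$p$ group $F$ is projective, \cite[Corollary 23.1.2]{FriedJarden} (Lubotzky--van den Dries) produces a field $L'$ \emph{containing $\Q(\zeta_p)$} with $\G_{L'} \cong F$. Now both $L$ and $L'$ are fields containing $\zeta_p$ with $\G_L^{[3,p]} \cong F^{[3,p]} \cong \G_{L'}^{[3,p]}$, so Theorem A of \cite{CheboluEfratMinac_QuotientsDetermineCohomology} applies legitimately and gives $H^2(\G_L, \Z/p) \cong H^2(\G_{L'}, \Z/p)$, which vanishes because $\G_{L'} \cong F$ is projective. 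In short: the missing ingredient is not a sharper extraction from \cite{CheboluEfratMinac_QuotientsDetermineCohomology}, but the realization of the abstract group $F$ as the absolute Galois group of a suitable field, which converts your group-theoretic comparison into the field-theoretic one that the cited theorem actually covers. With that insertion your argument becomes the paper's proof.
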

\begin{proof}
  Suppose first that $K$ is of characteristic $p$.
  Let $H \leq \G_K$ be a closed subgroup and $L/K$ the corresponding algebraic extension.
  Then the maximal pro-$p$ quotient $F := H(p) = \G_L(p)$ is free pro-$p$ \cite[Corollary 6.1.3]{NSW}.
  Since the quotient $H^{[3,p]}$ of $H$ is pro-$p$,
  the epimorphism $H \to F$ induces an isomorphism $H^{[3,p]} \cong F^{[3,p]}$.
  This proves that $\G_K$ satisfies the given condition.

  Suppose conversely that $K$ is a field of characteristic not $p$
  such that the condition on $\G_K$ is satisfied;
  we shall show that $\Br(K)[p] = 0$.
  Let $L = K(\zeta_p)$, and $H = \G_L \leq \G_K$.
  The group $G_K/H \cong \Gal(L/K)$ embeds into $(\Z/p)^\times$
  and hence is cyclic of order dividing $p-1$.
  Let $F$ be a free pro-$p$ group satisfying $H^{[3,p]} \cong F^{[3,p]}$.
  By \cite[Corollary 23.1.2]{FriedJarden} there exists a field $L'$ containing $\Q(\zeta_p)$ with absolute Galois group $\G_{L'}$ isomorphic to $F$.
  Now \cite[Theorem A]{CheboluEfratMinac_QuotientsDetermineCohomology} implies that
  $H^2(\G_L, \Z/p) \cong H^2(\G_{L'}, \Z/p)$,
  and the latter group vanishes since $\G_{L'} \cong F$ is projective.
  Therefore $H^2(\G_L, \mu_p) \cong H^2(\G_L, \Z/p)$ also vanishes.
  Since the restriction map $H^2(\G_K, \mu_p) \to H^2(\G_L, \mu_p)$ is injective
  as $L/K$ is of degree coprime to $p$,
  $\Br(K)[p] = H^2(\G_K, \mu_p)$ must be trivial.
\end{proof}

Let $\mathfrak{C}$ be the class of finite groups which are extensions of an abelian group by a nilpotent group of nilpotence class at most $2$.
In other words, $\mathfrak{C}$ consists of finite groups $G$ for which there exists a normal subgroup $H$ such that $G/H$ is abelian, and the quotient of $H$ by its centre is abelian.
An arbitrary profinite group $G$ always has a maximal pro-$\mathfrak{C}$ quotient $G(\mathfrak{C})$ in the usual sense \cite[Definition 17.3.2]{FriedJarden}.
\begin{corollary}\label{cor:char-p-sharpened}
  Let $K$ and $K'$ be two fields with $\G_K(\mathfrak{C}) \cong \G_{K'}(\mathfrak{C})$.
  If $K'$ has characteristic $p > 0$ and $\Br(K)[p] \neq 0$,
  then $K$ also has characteristic $p$.
\end{corollary}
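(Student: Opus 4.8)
The plan is to verify the hypothesis of Lemma \ref{lem:char-p-sharpened} for $K$ and then invoke that lemma together with the assumption $\Br(K)[p] \neq 0$. Since $K'$ has characteristic $p$, the converse part of Lemma \ref{lem:char-p-sharpened} tells us that $\G_{K'}$ already satisfies this condition. The task is therefore to transport the condition across an isomorphism $\phi \colon \G_K(\mathfrak{C}) \xrightarrow{\sim} \G_{K'}(\mathfrak{C})$, and the main point is that for the relevant open normal subgroups the group $H^{[3,p]}$ can be read off from the pro-$\mathfrak{C}$ quotient alone.

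Concretely, I would first establish the key observation that for any profinite group $G$ and any open normal subgroup $H \trianglelefteq G$ with $G/H$ \emph{abelian}, the projection onto $\bar{H}$, the image of $H$ in $G(\mathfrak{C})$, induces an isomorphism $H^{[3,p]} \cong \bar{H}^{[3,p]}$. To see this, write $N = \ker(G \to G(\mathfrak{C}))$ and $M_H = (H^p[H,H])^p[H^p[H,H], H]$, so that $H^{[3,p]} = H/M_H$. As $M_H$ is characteristic in $H$ and $H \trianglelefteq G$, the subgroup $M_H$ is normal in $G$, and the extension $1 \to H^{[3,p]} \to G/M_H \to G/H \to 1$ exhibits $G/M_H$ as an extension of the abelian group $G/H$ by the nilpotent-of-class-$\leq 2$ pro-$p$ group $H^{[3,p]}$. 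Using that $\mathfrak{C}$ is closed under subgroups and quotients, every finite quotient of $G/M_H$ is then an extension of abelian by nilpotent of class $\leq 2$, hence lies in $\mathfrak{C}$; thus $G/M_H$ is pro-$\mathfrak{C}$ and the map $G \to G/M_H$ factors through $G(\mathfrak{C})$, giving $N \subseteq M_H$. Since $G/H$ is abelian, hence in $\mathfrak{C}$, we also have $N \subseteq H$, so $\bar{H} = H/N$; computing $\bar{H}^{[3,p]}$ as a quotient of $H$ one finds that the relevant normal subgroup pulls back to $N M_H = M_H$, whence $\bar{H}^{[3,p]} = H/M_H = H^{[3,p]}$.

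Granting this, I would finish as follows. Let $H \trianglelefteq \G_K$ be open normal with $\G_K/H$ cyclic of order dividing $p-1$; in particular $\G_K/H$ is abelian. Its image $\bar{H} \trianglelefteq \G_K(\mathfrak{C})$ satisfies $\G_K(\mathfrak{C})/\bar{H} \cong \G_K/H$, and $\bar{H}' := \phi(\bar{H})$ is the image of a unique open normal subgroup $H' \trianglelefteq \G_{K'}$ with $\G_{K'}/H' \cong \G_K/H$ cyclic of order dividing $p-1$. Combining the key observation (applied over $K$ and over $K'$), the functoriality of $(\cdot)^{[3,p]}$, and the restriction of $\phi$ to an isomorphism $\bar{H} \cong \bar{H}'$, I obtain
\[ H^{[3,p]} \cong \bar{H}^{[3,p]} \cong (\bar{H}')^{[3,p]} \cong (H')^{[3,p]}. \]
By the converse part of Lemma \ref{lem:char-p-sharpened} applied to the characteristic-$p$ field $K'$, the right-hand group is isomorphic to $F^{[3,p]}$ for some free pro-$p$ group $F$. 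Hence $\G_K$ satisfies the hypothesis of Lemma \ref{lem:char-p-sharpened}, and since $\Br(K)[p] \neq 0$, that lemma forces $K$ to have characteristic $p$.

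The step I expect to require the most care is the key observation relating $H^{[3,p]}$ to $\bar{H}^{[3,p]}$: one must check that $\mathfrak{C}$ is closed under subgroups and quotients, so that $\bar H$ and the various restrictions behave well, and — crucially — that $G/M_H$ is genuinely pro-$\mathfrak{C}$. This last point is exactly where the defining shape of $\mathfrak{C}$ as ``abelian-by-(nilpotent of class $\leq 2$)'' is used, and where the hypothesis that $G/H$ be abelian enters; everything else is bookkeeping around the universal property of $G(\mathfrak{C})$.
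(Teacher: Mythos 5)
Your proposal is correct and follows essentially the same route as the paper: the paper's (very terse) proof likewise transports each open normal subgroup $H \trianglelefteq \G_K$ with cyclic quotient of order dividing $p-1$ across the isomorphism of pro-$\mathfrak{C}$ quotients and invokes Lemma \ref{lem:char-p-sharpened}, relying implicitly on exactly your key observation that for abelian $G/H$ the quotient $H^{[3,p]}$ is already determined by $G(\mathfrak{C})$ (via $N \subseteq M_H$, since $G/M_H$ is pro-$\mathfrak{C}$). Your write-up simply supplies the details the paper leaves to the reader.
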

\begin{proof}
  To any open normal subgroup $H$ of $\G_K$ with cyclic quotient of order dividing $p-1$,
  a choice of isomorphism $\G_K(\mathfrak{C}) \cong \G_{K'}(\mathfrak{C})$ associates
  an open normal subgroup $H'$ of $\G_{K'}$ with cyclic quotient of the same order
  such that the maximal two-step nilpotent quotients of $H$ and $H'$ are isomorphic.
  In particular $H^{[3,p]} \cong H'^{[3,p]}$.
  Thus the lemma applies.
\end{proof}

We now give a sharpened version of Corollary \ref{cor:val-exists},
concerning the existence of a valuation with residue field which is not $p$-closed.
Compared to there, the Galois-theoretic assumption is weakened.
On the other hand, we also do not obtain full henselianity,
but only $p$-henselianity, which was a crucial property in Section~\ref{sec:pinning-down}.
\begin{proposition}\label{prop:val-exists-sharpened}
  Let $p$ be a prime, $q > 1$ a power of $p$, and
  $K$ a field of characteristic $p$
  such that $\G_K(\mathfrak{C}) \cong \G_{\F_q\pow{t}}(\mathfrak{C})$.
  Then $K$ carries a non-trivial $p$-henselian valuation $v$ whose residue field $Kv$ is not $p$-closed.
\end{proposition}
Our proof loosely follows \cite[Proposition 2.3]{EfratFesenko},
although somewhat more work is required here.
We will use the notion of $l$-henselianity for a valuation $v$ on a field $K$,
where $l$ is an arbitrary prime number \cite[§4.2]{EnglerPrestel}.
So far, we have only considered this for $l$ the characteristic of $K$.
In general, a valuation $v$ on $K$ is $l$-henselian
if it extends uniquely to every Galois extension of $l$-power degree,
or equivalently to every $\Z/l$-extension \cite[Theorem~4.2.2]{EnglerPrestel}.
This holds if and only if Hensel's Lemma holds for every polynomial whose roots
generate a Galois extension of $l$-power degree \cite[Theorem 4.2.3]{EnglerPrestel}.
If the residue field $Kv$ is of characteristic not equal to $l$ and
$K$ contains the $l$-th roots of unity,
this is equivalent to $1 + \mathfrak{m}_v$ being contained in the set of $l$-th powers \cite[Corollary 4.2.4]{EnglerPrestel}.
This is an analogue of the previously used fact that,
when $l$ is the characteristic of $K$, $v$ is $l$-henselian
if and only if $\mathfrak{m}_v$ is contained in the image of the polynomial $X^l-X$ \cite{ChatzidakisPerera_pHenselianity}.

The following can be seen as a criterion for $p$-henselianity.
It is inspired by the criterion for full henselianity \cite[Proposition~2.1]{Efrat_GaloisCharacterization}.
\begin{lemma}\label{lem:criterion-p-henselian}
  Let $v$ be a valuation on a field $K$.
  Let $l$ be a prime number not equal to the characteristic of $Kv$
  such that $Kv$ is not $l$-closed and $vK$ is not $l$-divisible.
  Let $p$ also be a prime number.
  If $v$ is not $p$-henselian, then some finite abelian extension of $K$ has a $(\Z/l)^{p+1}$-extension.
\end{lemma}
\begin{proof}
  There is a $\Z/p$-extension $K'/K$ to which $v$ does not prolong uniquely.
  By \cite[Theorem 3.3.3]{EnglerPrestel} there are precisely $p$ prolongations $w_1, \dotsc, w_p$ of $v|_K$ to $K'$,
  necessarily with residue field $K'w_i = Kv$.
  These prolongations are incomparable \cite[Lemma 3.2.8]{EnglerPrestel}.
  Prolong each $w_i$ as $w_i'$ to $K'(\zeta_l)$ in some way.
  Noting that $[K'(\zeta_l) : K'] < l$, \cite[Corollary 3.2.3]{EnglerPrestel} implies
  that each residue field $K'(\zeta_l)w_i'$ is an extension of $K'w_i = Kv$ of degree less than $l$.
  Therefore none of the fields $K'(\zeta_l)w_i'$ can be $l$-closed,
  since a $\Z/l$-extension of $K'w_i = Kv$ cannot trivialise over $K'(\zeta_l)w_i'$.

  Let $R \subseteq K'(\zeta_l)$ be the intersection of the valuation rings $\mathcal{O}_{w_i'}$.
  By \cite[Theorem 3.2.7]{EnglerPrestel},
  the natural ring homomorphism $R \to \prod_{i=1}^p K'(\zeta_l)w_i'$ is surjective,
  so in particular $R^\times/l$ surjects onto the product of the $(K'(\zeta_l)w_i')^\times/l$.
  Since the fields $K'(\zeta_l)w_i'$ are not $l$-closed,
  Kummer theory implies that the groups $(K'(\zeta_l)w_i')^\times/l$ are not trivial.
  Therefore $R^\times/l$ has size at least $l^p$.
  On the other hand $K'(\zeta_l)^\times/R^\times$ surjects onto $K'(\zeta_l)^\times/\mathcal{O}_{w_1'}^\times$,
  which is precisely the value group of $w_1'$.
  This value group contains the value group of $v$ as a subgroup of finite index,
  and therefore is not $l$-divisible since the value group of $v$ is not $l$-divisible
  (see for instance \cite[Lemma~1.1.4~(d)]{Efrat_ValuationsOrderingsMilnorKTheory}).
  The short exact sequence $1 \to R^\times \to K'(\zeta_l)^\times \to K'(\zeta_l)^\times/R^\times \to 1$ induces an exact sequence
  $1 \to R^\times/l \to K'(\zeta_l)^\times/l \to (K'(\zeta_l)^\times/R^\times)/l \to 1$ since $K'(\zeta_l)^\times/R^\times$ is torsion-free.
  We deduce that $K'(\zeta_l)^\times/l$ has size at least $l^{p+1}$,
  and so the field $K'(\zeta_l)$ has a $(\Z/l)^{p+1}$-extension by Kummer theory.
  The extension $K'(\zeta_l)/K$ is abelian as the composite of the two abelian extensions $K'/K$ and $K(\zeta_l)/K$.
\end{proof}

We also require the following simple lemma for the proof of Proposition~\ref{prop:val-exists-sharpened}.
\begin{lemma}\label{lem:exists-l}
  Let $p$ be a prime and $q > 1$ a power of $p$.
  There exist a prime $l \neq p$ and $m \geq 1$ such that
  $l \nmid [\F_q : \F_p]$,
  $\zeta_l \not\in \F_q$,
  $[\F_p(\zeta_l) : \F_p]$ is a power of $p$,
  and $[\F_q(\zeta_{l^{m+1}}) : \F_q(\zeta_l)] = [\F_p(\zeta_{l^{m+1}}) : \F_p(\zeta_l)]$ is a power of $l$ not equal to $1$.
\end{lemma}
\begin{proof}
  For any natural number $n$, the quotient
  \[ \frac{p^{p^{n+1}}-1}{p^{p^n}-1} = \sum_{i=0}^{p-1} (p^{p^n})^i \]
  is congruent to $p$ modulo $p^{p^n}-1$,
  and therefore certainly coprime to $p^{p^n}-1$.
  It follows that there are infinitely many primes $l$ dividing $p^{p^n}-1$ for some $n$.
  Pick such a prime $l$ for which $l \nmid q-1$, $l \nmid [\F_q : \F_p]$.
  We have $\zeta_l \not\in \F_q$ since the order $\lvert \F_q^\times\rvert = q-1$ is not a multiple of $l$.
  Let $m \geq 1$ be maximal such that $l^m \mid p^{p^n}-1$,
  so that the cyclic group $\F_{p^{p^n}}^\times$ has elements of order $l^m$, but none of order $l^{m+1}$.
  This means that $\zeta_l, \zeta_{l^m} \in \F_{p^{p^n}}$, but $\zeta_{l^{m+1}} \not\in \F_{p^{p^n}}$.
  The degree $[\F_p(\zeta_l) : \F_p] | [\F_{p^{p^n}} : \F_p] = p^n$ is a power of $p$.

  The degree $[\F_p(\zeta_{l^{m+1}}) : \F_p(\zeta_l)]$ is a power of $l$
  since the extension arises as an iterated Kummer extension by successively adjoining $l$-th roots of the element $\zeta_l$.
  The degree is also not $1$ since $\F_p(\zeta_l) \subseteq \F_{p^{p^n}} \not\ni \zeta_{l^{m+1}}$.
  Lastly, $[\F_p(\zeta_{l^{m+1}}) : \F_p(\zeta_l)] = [\F_q(\zeta_{l^{m+1}}) : \F_q(\zeta_l)]$ since
  $\F_q(\zeta_l)$ and $\F_p(\zeta_{l^{m+1}})$ are Galois extensions of $\F_p(\zeta_l)$ of coprime degrees.
\end{proof}

\begin{proof}[Proof of Proposition \ref{prop:val-exists-sharpened}]
  Choose $l$ and $m$ as in Lemma \ref{lem:exists-l}.
  Fix an isomorphism $\sigma \colon \G_K(\mathfrak{C}) \to \G_{\F_q\pow{t}}(\mathfrak{C})$.
  Let $L_0/K$ be the cyclic extension corresponding to $\F_q\pow{t}(\zeta_l)/\F_q\pow{t}$ via $\sigma$,
  and let $L = L_0(\zeta_l)$.
  Both $L_0/K$ and $K(\zeta_l)/K$ are abelian extensions of $p$-power degree by the choice of $l$,
  and hence so is their compositum $L/K$.
  Let $F/\F_q\pow{t}$ be the extension corresponding to $L$.
  Since $L/K$ and $F/\F_q\pow{t}$ are abelian extensions,
  the Galois groups $\G_L$ and $\G_F$ have isomorphic maximal $2$-step nilpotent quotients by the construction of $\mathfrak{C}$.
  In particular, the graded rings $H^\ast(\G_L, \Z/l) \cong \KM_\ast(L)/l$ and $H^\ast(\G_F, \Z/l) \cong \KM_\ast(F)/l$ are isomorphic by \cite[Theorem A]{CheboluEfratMinac_QuotientsDetermineCohomology}.
  Thus $L^\times/l = \KM_1(L)/l \cong F^\times/l \cong (\Z/l)^2$ and $\KM_2(L)/l \cong H^2(\G_F, \Z/l) = \Br(F)[l] \cong \Z/l$.
  Since the ring $\KM_\ast(L)/l$ is skew-commutative,
  the multiplication map $L^\times/l \times L^\times/l \to \KM_2(L)/l$ is (surjective and) skew-commutative,
  so $\KM_2(L)/l$ is naturally identified with the second exterior power $\bigwedge^2_{\F_l} L^\times/l$.
  Thus the product of any $\F_l$-linearly independent elements of $L^\times/l$ in $\KM_2(L)/l$ is non-zero.
  Applying \cite[Main Theorem]{Efrat_ConstructionValuations}\footnote{Cf.\
    also \cite[Theorem 2.1]{EfratFesenko},
    but beware the misprint $u(l) \neq 0$ in place of the correct $u(l) = 0$.}
  with the prime $l$ and $T = L^{\times l}$,
  it follows that $L$ carries a valuation $v$ such that
  $(vL : l \cdot vL) \geq l$, the residue field $Lv$ has characteristic not $l$,
  and $1 + \mathfrak{m}_v \subseteq L^{\times l}$, so $(L,v)$ is $l$-henselian.

  We claim that the residue field $Lv$ does not contain $\zeta_{l^{m+1}}$.
  If it does, then by $l$-henselianity $\zeta_{l^{m+1}} \in L$,
  since $\zeta_{l^{m+1}}$ has $l$-power degree over $L \supseteq \F_p(\zeta_l)$.
  We know that $L^\times/l \cong (\Z/l)^2$, say generated by the classes of $a, b \in L^\times$.
  Then by Kummer theory both $L(a^{1/l^{m+1}})$ and $L(b^{1/l^{m+1}})$ are $\Z/l^{m+1}$-extensions of $L$.
  Their intersection is $L$, since their minimal non-trivial subextensions $L(a^{1/l})$ and $L(b^{1/l})$
  are distinct.
  Therefore $L$ has a $(\Z/l^{m+1})^2$-extension, namely $L(a^{1/l^{m+1}}, b^{1/l^{m+1}})$,
  and accordingly $F$ also has a $(\Z/l^{m+1})^2$-extension.
  By \cite[Proposition~7.5.9]{NSW}, this implies that the residue field of $F$ contains $\zeta_{l^{m+1}}$.
  Since $F/\F_q\pow{t}$ is a Galois extension of $p$-power degree,
  its residue field is a $p$-power degree extension of $\F_q$.
  However, $\zeta_{l^{m+1}}$ has degree divisible by $l$ over $\F_q$ by the choice of $l$.
  This contradiction shows that $\zeta_{l^{m+1}} \not\in Lv$.
  The degree of $\zeta_{l^{m+1}}$ over $Lv$ is some power of $l$,
  since $\zeta_{l^{m+1}}$ has $l$-power degree over $\F_p(\zeta_l) \subseteq Lv$.

  We now investigate the restriction of $v$ to $K$.
  Observe that $\zeta_{l^{m+1}}$ generates an abelian extension of $Kv$ whose degree is divisible by $l$
  since $\zeta_{l^{m+1}}$ has degree divisible by $l$ even over $Lv$,
  and therefore $Kv$ is not $l$-closed.
  If $v|_K$ is not $p$-henselian, then by Lemma~\ref{lem:criterion-p-henselian}
  some finite abelian extension $K'/K$ has a $(\Z/l)^{p+1}$-extension.
  However, the extension of $\F_q\pow{t}$ corresponding to it under $\sigma$
  (as indeed any finite extension of $\F_q\pow{t}$)
  does not have a $(\Z/l)^{p+1}$-extension \cite[Proposition 7.5.9]{NSW}.
  This contradiction shows the $p$-henselianity of $v|_K$.

  Lastly, let us show that $\zeta_l \not\in Kv$.
  Supposing for a contradiction that $\zeta_l \in Kv$, then also $\zeta_l \in K$
  as $\zeta_l$ has $p$-power degree over $\F_p$ and $v|_K$ is $p$-henselian.
  Therefore $K$ has a $(\Z/l)^2$-extension,
  given by on the one hand adjoining an $l$-th root of an element $x \in K^\times$
  with $v(x) \in vK$ not divisible $l$,
  and on the other hand adjoining an $l$-th power root of unity
  (since we know that $\zeta_{l^{m+1}}$ does not lie in $Lv$ and therefore not in $K$).
  However, $\F_q\pow{t}$ does not have a $(\Z/l)^2$-extension,
  so given the isomorphism $\sigma$ we once more obtain a contradiction.
  Therefore $Kv(\zeta_l)/Kv$ is a non-trivial Galois extension
  which is of $p$-power degree by the choice of $l$.
  Thus $Kv$ is not $p$-closed.
\end{proof}

\begin{remark}\label{rem:strong-val-exists-sharpened}
  Inspection of the proof shows that the valuation $v$ is not only $p$-henselian,
  but in fact $r$-henselian for every prime number $r$,
  since Lemma~\ref{lem:criterion-p-henselian} can be applied for primes other than the characteristic $p$.
\end{remark}

We can now deduce the following strengthened version of Theorem \ref{thm:intro-basic}.
\begin{theorem}\label{thm:main-sharpened}
  Let $p$ be a prime number, $q = p^n$ for some $n \geq 1$.
  Let $K$ be a field satisfying the following axioms:
  \begin{enumerate}
  \item[(Gal\textsubscript{0})]
    The maximal pro-$\mathfrak{C}$-quotients $\G_K(\mathfrak{C})$ and $\G_{\F_q\pow{t}}(\mathfrak{C})$ are isomorphic,
    where $\mathfrak{C}$ is (as above) the class of finite groups which are extensions of an abelian group by a nilpotent group of nilpotence class at most $2$;
  \item[(Imp)] $K$ has exponent of imperfection at most $1$;
  \item[(Brau)] $\Br(K)[p] \cong \Z/p\Z$;
  \item[(Pair)] the natural pairing $\Hom(\G_K, \Z/p) \times K^\times/p \to \Br(K)[p]$ induces an isomorphism $K^\times/p \cong \Hom(\Hom(\G_K, \Z/p), \Br(K)[p])$.
  \end{enumerate}
  Then $K$ is isomorphic to the local field $\F_q\pow{t}$.
\end{theorem}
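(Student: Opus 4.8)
The plan is to run the proof of Theorem \ref{thm:intro-basic} essentially verbatim, replacing each appeal to the full isomorphism $\G_K \cong \G_{\F_q\pow{t}}$ by the sharpened statement available under (Gal\textsubscript{0}). First I would deduce that $\mathrm{char}\,K = p$: since $\F_q\pow{t}$ has characteristic $p$ and $\Br(K)[p] \cong \Z/p$ is non-zero by (Brau), Corollary \ref{cor:char-p-sharpened} applies with $K' = \F_q\pow{t}$ and gives this directly from (Gal\textsubscript{0}).

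Next I would verify that the three standing hypotheses of Section \ref{sec:pinning-down} hold. Characteristic $p$ has just been shown, and the axioms (Brau) and (Pair) are hypotheses; the only point requiring an argument is that $\G_K$ has only countably many $\Z/p$-extensions. These are classified by $\Hom(\G_K, \Z/p)$, and every continuous homomorphism $\G_K \to \Z/p$ factors through the maximal pro-$\mathfrak{C}$ quotient because $\Z/p$ is abelian and hence lies in $\mathfrak{C}$. Thus (Gal\textsubscript{0}) yields $\Hom(\G_K, \Z/p) \cong \Hom(\G_{\F_q\pow{t}}, \Z/p)$, and the latter is countable: by Artin--Schreier theory it is isomorphic to $\F_q\pow{t}/\wp(\F_q\pow{t})$, a countable group, exactly as already used in the proof of Theorem \ref{thm:intro-basic}.

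With the standing hypotheses in force, I would assemble the two main ingredients. Lemma \ref{lem:val-exists-sharpened}---whose hypotheses $\mathrm{char}\,K = p$ and (Gal\textsubscript{0}) are now met, with $q > 1$ since $n \geq 1$---produces a non-trivial $p$-henselian valuation $v$ on $K$ with $Kv$ not $p$-closed. The axiom (Imp) is equivalent to $[K : K^p] \leq p$, as explained in Section \ref{sec:explaining}. Proposition \ref{prop:field-is-local} then shows that $(K, v)$ is a complete discretely valued field with finite residue field, so $K \cong \F_{q'}\pow{t}$ for some power $q'$ of $p$.

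It remains to pin down $q' = q$ using only pro-$\mathfrak{C}$ data. Because finite abelian groups lie in $\mathfrak{C}$, the abelianisation $\G_K^{\mathrm{ab}}$ coincides with the abelianisation of $\G_K(\mathfrak{C})$, so (Gal\textsubscript{0}) gives $\G_K^{\mathrm{ab}} \cong \G_{\F_q\pow{t}}^{\mathrm{ab}}$; combined with $K \cong \F_{q'}\pow{t}$ this forces $\G_{\F_{q'}\pow{t}}^{\mathrm{ab}} \cong \G_{\F_q\pow{t}}^{\mathrm{ab}}$, whence $q' = q$ by \cite[Proposition 7.5.9]{NSW}, since non-isomorphic local fields of characteristic $p$ have non-isomorphic abelianised absolute Galois groups. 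Since the substantive work has been relocated into Corollary \ref{cor:char-p-sharpened} and Lemma \ref{lem:val-exists-sharpened}, I expect no serious obstacle in the present assembly; the one subtlety to keep in view is that every piece of Galois-theoretic data actually invoked---the count of $\Z/p$-extensions and the abelianisation---is genuinely recoverable from the pro-$\mathfrak{C}$ quotient alone.
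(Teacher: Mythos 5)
Your proposal is correct and follows essentially the same route as the paper's own proof: Corollary \ref{cor:char-p-sharpened} to get characteristic $p$, Lemma \ref{lem:val-exists-sharpened} to produce the $p$-henselian valuation with non-$p$-closed residue field, Proposition \ref{prop:field-is-local} to conclude $K \cong \F_{q'}\pow{t}$, and \cite[Proposition 7.5.9]{NSW} combined with the fact that abelianisations are recoverable from pro-$\mathfrak{C}$ quotients to force $q' = q$. Your explicit verification of the standing hypotheses of Section \ref{sec:pinning-down} --- in particular the countability of $\Hom(\G_K, \Z/p)$, obtained by noting that such homomorphisms factor through $\G_K(\mathfrak{C})$ --- is left implicit in the paper's proof, and is a correct and worthwhile addition.
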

\begin{proof}
  By Corollary \ref{cor:char-p-sharpened}, $K$ has characteristic $p$.
  By Proposition \ref{prop:val-exists-sharpened}, $K$ carries a non-trivial $p$-henselian valuation $v$ whose residue field is not $p$-closed.
  Now Proposition \ref{prop:field-is-local} shows that $K$ is isomorphic to $\F_{q'}\pow{t}$ for some power $q'$ of $p$.
  If we had $q' \neq q$, then $\G_{\F_{q'}\pow{t}}^{\mathrm{ab}} \not\cong \G_{\F_q\pow{t}}^{\mathrm{ab}}$ by \cite[Proposition 7.5.9]{NSW}, and so in particular $\G_{\F_{q'}\pow{t}}(\mathfrak{C}) \not\cong \G_{\F_q\pow{t}}(\mathfrak{C})$.
  Therefore $q' = q$ and $K \cong \F_q\pow{t}$.
\end{proof}

The choice of the class of groups $\mathfrak{C}$ is not quite optimal.
It is already apparent from the proofs above that it could be shrunk by some amount,
strengthening the statement of Theorem \ref{thm:main-sharpened}.
However, this would yield a class less easy to describe than $\mathfrak{C}$.

\bibliographystyle{amsalpha}
\bibliography{Bib/Bibliography}

\end{document}